\theoremstyle{plain}
\newtheorem{thm}{Theorem}[section]
\newtheorem{lem}[thm]{Lemma}
\newtheorem{cor}[thm]{Corollary}
\theoremstyle{definition}
\newtheorem{defn}{Definition}
\theoremstyle{remark}
\newtheorem{remark}{Remark}
\title{On The Number of Edge-3-Colourings of A Snipped Snark}
\author{Scott A. McKinney}
\date{\today}
\begin{document}

\begin{abstract}
For a given snark $G$ and edge $e$ of $G$, we can form a cubic graph $G_e$ using an operation we call "edge subtraction".  The number of 3-edge-colourings of $G_e$ is $18 \cdot \psi(G,e)$ for some nonnegative integer $\psi(G,e)$. Given snarks $G_1$ and $G_2$, we can form a new snark $G$ using techniques given by Isaacs \cite{Is} and Kochol \cite{Ko}.  In this note we give relationships between $\psi(G_1,e_1)$, $\psi(G_2,e_2)$, and $\psi(G,e)$ for particular edges $e_1$, $e_2$, and $e$, in $G_1$, $G_2$, and $G$ (respectively). As a consequence, if $g,h,i,j,k,l$ are each a nonnegative integer, then there exists a cyclically 5-edge-connected snark $G$ with an edge $e$ such that $\psi(G,e)=5^g \cdot 7^h$, and a cyclically 4-edge-connected snark $G_0$ with an edge $e_0$ such that $\psi(G_0,e_0)=2^i \cdot 3^j \cdot 5^k \cdot 7^l$.
\end{abstract}

\maketitle


\section{Introduction}

This paper pertains only to undirected graphs. A graph is "cubic" if each vertex is attached to 3 edges, and is "simple" if it is connected, has finitely many vertices, and has no loops or multiple edges.  A graph is "quasi-cubic" if each vertex is attached to 1 or 3 edges, and is "non-trivial" if it has at least one 3-valent vertex. A 3-edge-decomposition of a simple, non-trivial quasi-cubic graph is a partitioning of the edges into three classes in such a way that no two adjacent edges are in the same class. A 3-edge-colouring of a simple, non-trivial quasi-cubic graph is an assignment of three colours to each edge of the graph in such a way that no two adjacent edges are assigned the same colour.  Note that for every 3-edge-decomposition of a graph $G$, there are $3!=6$ 3-edge-colourings of $G$.\medskip
An important family of cubic graphs is the snark family.\medskip
\begin{defn}[Snark]
A snark $G$ is a simple, non-3-edge-colourable, cubic graph, with girth of at least 5, and at least cyclically 4-edge-connected.\medskip
\end{defn}

See Gardner \cite{Ga} for background information on how snarks arose during investigations surrounding the 4-color map theorem.  Also see Bradley \cite{Br2} and Hagglund \cite{Ha} for further motivations and open questions.\medskip
\begin{defn}[Edge elimination]Given an edge $e$ in a snark $G$, one can eliminate $e$ from $G$ to obtain a non-cubic graph $G-\{e\}$ by removing the edge $e$, and leaving the two divalent vertices to which $e$ was attached.
\end{defn}
\begin{defn}[Edge subtraction]
\label{defn:Edge subtraction} Given an edge $e=(u,v)$ in a cubic graph
$G$, one can "subtract" the edge $e$ from $G$ to obtain a cubic graph $G_e$.
First, remove the edge $e$.  This leaves divalent vertices $u$ and $v$, with edges $(u,u_1), (u,u_2), (v,v_1), (v,v_2)$ . Delete vertices $u$ and $v$ and the attached edges, and form new edges $(u_1, u_2)$ and $(v_1, v_2)$, to obtain a new cubic graph $G_e$.\medskip
\end{defn}

\begin{defn}[Kempe chain]
\label{defn:Kempe chain} A Kempe chain on a simple, colourable, non-trivial quasi-cubic graph $G$ is a cycle of edges of $G$ with edges given two distinct colours, necessarily in an alternating way.\medskip
\end{defn}

This note is concerned with the number of colourings of the "snark minus an edge" $G_e$, where $G$ is a snark and $e$ is a specified edge of $G$. We shall first establish a number of facts about $G_e$ where $G$ is any simple, non-colourable cubic graph.

\section {Some Useful Lemmas}

The following facts will be useful for counting the number of colourings of a snark minus an edge, as defined above.  Unless otherwise noted, all theorems and lemmas appear in the following context.\medskip
Context 1: Let $G$ be any simple, non-colourable cubic graph, and let $e$ be any edge of $G$. Let $C$ be a set of 3 colours $\{a,b,c\}$, and $C(G_e)$ be the (possibly empty) set of colourings of $G_e$ with colours from $C$.  Let $\# \{\dots\}$ denote the cardinality of a set $\{\dots\}$.  Let $d$ be an edge of $G_e$ and let $f$ be a colouring of $G_e$ such that $f(d)$ = $x$ or $y$, where $x$ and $y$ are distinct choices of colours of $C$. Let $K(d,x,y)$ denote the Kempe Chain of colours $x$, $y$, containing edge $d$. Let $d_1$, $d_2$ denote the two new edges of $G_e$ resulting from the edge subtraction of $e$ from $G$.\medskip
\begin{lem}Let $e_1, e_2$ be edges of $G$, and suppose there is no 3-edge-decomposition of $G_e$ in which $e_1$ and $e_2$ are in the same class.  Then the number of 3-edge-decompositions of $G_e$ is precisely the number of colourings $F$ of $G_e$ such that $F(e_1)=x$ and $F(e_2)=y$, where $x$ and $y$ are distinct choices of colours of C.\medskip
\end{lem}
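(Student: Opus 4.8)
The plan is to count 3-edge-decompositions of $G_e$ by counting colourings, using the standard 6-to-1 correspondence noted in the introduction. Fix the two distinct colours $x,y\in C$. I would first define a map $\Phi$ from the set of colourings $F$ of $G_e$ with $F(e_1)=x$ and $F(e_2)=y$ to the set of 3-edge-decompositions of $G_e$, namely the map sending a colouring to its underlying partition of the edge set into colour classes. The claim is that $\Phi$ is a bijection, and this bijectivity is exactly the content of the lemma.

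For surjectivity: given any 3-edge-decomposition $\mathcal{D}$ of $G_e$ with classes $\{A,B,C\}$, the hypothesis guarantees that $e_1$ and $e_2$ lie in \emph{different} classes of $\mathcal{D}$. Hence among the $3!=6$ colourings refining $\mathcal{D}$, there is at least one (in fact exactly one) that assigns colour $x$ to the class containing $e_1$ and colour $y$ to the class containing $e_2$; this colouring is a preimage of $\mathcal{D}$ under $\Phi$, so $\Phi$ is onto. For injectivity: suppose $F$ and $F'$ are two such colourings with $\Phi(F)=\Phi(F')=\mathcal{D}$. Then $F$ and $F'$ differ only by a permutation of the three colours among the classes of $\mathcal{D}$. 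But both send the class containing $e_1$ to $x$ and the class containing $e_2$ to $y$, and since $e_1,e_2$ lie in distinct classes, this pins down the permutation on two of the three classes; the third colour is then forced, so $F=F'$. Therefore $\Phi$ is injective, hence bijective, and the two counts agree.

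I expect the argument to be essentially routine; the only point requiring care is making explicit that the hypothesis "no 3-edge-decomposition of $G_e$ has $e_1$ and $e_2$ in the same class" is used in \emph{both} directions — it is what makes the preimage nonempty (surjectivity) and simultaneously what rigidifies the colour permutation enough to force uniqueness (injectivity). A subtlety worth flagging is the degenerate possibility that $e_1=e_2$: then no colouring could have $F(e_1)=x\neq y=F(e_2)$, but equally the hypothesis would fail vacuously only if $G_e$ has no 3-edge-decomposition at all, so one should either assume $e_1\neq e_2$ or observe that in the excluded case both sides of the claimed equality are zero. Beyond that, nothing deep is needed: the lemma is a bookkeeping statement repackaging the $6:1$ relationship between colourings and decompositions under a normalisation that fixes two colour classes.
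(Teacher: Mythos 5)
Your proof is correct and follows essentially the same route as the paper: the paper's one-line proof simply asserts the correspondence between such colourings and decompositions, and your write-up makes the bijection explicit, correctly identifying that the hypothesis is what guarantees both existence and uniqueness of the normalised colouring for each decomposition. The remark about the degenerate case $e_1=e_2$ is a reasonable extra precaution but not needed for how the lemma is used (the paper applies it to adjacent, hence distinct, edges).
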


\begin{proof}Each such colouring $F$ corresponds to an edge-decomposition where the edges in the class containing $e_1$ are coloured $x$ and the edges in the class containing $e_2$ are coloured $y$.\medskip
\end{proof}

\begin{remark}If $e_1, e_2$ are adjacent, then Lemma 2.1 applies.\medskip
\end{remark}

\begin{thm}\#\{3-edge-decompositions($G_e$)\}=3J for some nonnegative integer J (possibly 0).\medskip
\end{thm}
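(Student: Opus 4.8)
The plan is to prove that $\#C(G_e)=9e$, where $e:=\#\{f\in C(G_e):f(d_1)=f(d_2)=a\}$ for a fixed colour $a\in C$. Granting this, since (as the paper notes) each $3$-edge-decomposition of a graph yields exactly $3!=6$ $3$-edge-colourings, we get $6\cdot\#\{\text{$3$-edge-decompositions}(G_e)\}=9e$, hence $2\cdot\#\{\cdots\}=3e$; as $\gcd(2,3)=1$ this forces $e$ even and $3\mid\#\{\cdots\}$, i.e.\ $\#\{\cdots\}=3J$ with $J=e/2\in\N$. To prove $\#C(G_e)=9e$, for ordered colours $x,y$ set $C_{xy}:=\{f\in C(G_e):f(d_1)=x,\ f(d_2)=y\}$, so that $C(G_e)$ is the disjoint union of the three sets $C_{xx}$ and the six sets $C_{xy}$ with $x\neq y$; permuting colours shows each $\#C_{xx}=e$, so it is enough to produce, for each pair $x\neq y$, a bijection $C_{xy}\to C_{yy}$.

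The bijection will be a single Kempe interchange, and it works because of the following fact, whose proof is the crux of the argument. \emph{Key Lemma: for every $f\in C(G_e)$ the edges $d_1$ and $d_2$ lie on no common Kempe chain}; equivalently, whenever $x\neq y$ and $f(d_1)\in\{x,y\}$ we have $d_2\notin K(d_1,x,y)$. Assuming this, define $\Phi\colon C_{xy}\to C_{yy}$ (for $x\neq y$) by interchanging the colours $x$ and $y$ along $K(d_1,x,y)$: the edge $d_1$ lies on this chain, so its colour changes from $x$ to $y$, while $d_2$ does not lie on it, so $f(d_2)=y$ is unchanged; thus $\Phi(f)\in C_{yy}$. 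The inverse is the same recipe applied to $g\in C_{yy}$: interchange $x,y$ along $K(d_1,x,y)$, which still contains $d_1$ and, by the Key Lemma, not $d_2$. These two maps are mutually inverse because a Kempe chain, as an edge set, is fixed by interchanging its two colours. Hence $\#C_{xy}=\#C_{yy}=e$, and $\#C(G_e)=3e+6e=9e$.

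The main obstacle is the Key Lemma, which is also the only place the non-$3$-edge-colourability of $G$ enters; I would prove it by contradiction. Suppose $f\in C(G_e)$ has $d_1=(u_1,u_2)$ and $d_2=(v_1,v_2)$ on a common $\{x,y\}$-Kempe chain $K$. First dispose of the degenerate case: if $e$ lies on a triangle of $G$ then two of $u_1,u_2,v_1,v_2$ coincide, $G_e$ is obtained from $G$ by contracting that triangle and so is itself non-$3$-edge-colourable, whence $C(G_e)=\emptyset$ and the theorem is trivial ($J=0$); so assume $u_1,u_2,v_1,v_2$ are distinct and $d_1,d_2$ are non-adjacent. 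Reconstruct $G$ from $G_e$ by subdividing $d_1$ with a new vertex $u$, subdividing $d_2$ with a new vertex $v$, and adding the edge $e=(u,v)$. Then $K$ becomes a cycle through $u$ and $v$, split by $u$ and $v$ into two arcs, each running from a neighbour of $u$ to a neighbour of $v$; call one of them $A$. Now colour $G$ as follows: keep $f$ everywhere except interchange $x$ and $y$ along $A$; colour $e$ with the third colour $z$; and colour each of $(u,u_1),(u,u_2),(v,v_1),(v,v_2)$ with the unique colour of $\{x,y\}$ missing among the other two edges at its endpoint $u_i$ or $v_j$. Since at each of $u_1,u_2,v_1,v_2$ the two surviving edges carry one colour of $\{x,y\}$ together with $z$, this is well-defined, and a short vertex-by-vertex check shows the result is a proper $3$-edge-colouring of $G$ — at $u$, at $v$, at the four junction vertices, and along $A$, everything else being untouched; the only fiddly point is the parity bookkeeping, since the colours forced at $v$ depend on whether the cut point $d_2$ sits at an even or odd position along $K$, but this always comes out consistently. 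This contradicts that $G$ is not $3$-edge-colourable, and the Key Lemma follows.
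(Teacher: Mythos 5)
Your proposal is correct and is essentially the paper's own argument: your Key Lemma is exactly the paper's Lemma~2.3 ($d_2\notin K(d_1,x,y)$), proved the same way by reinserting $e$, splitting the chain into two arcs, swapping colours on one arc, and giving $e$ the third colour. The only difference is bookkeeping afterwards --- the paper normalizes by fixing $f(d_2,e_1,e_2)=abc$ and uses the Kempe interchange at $d_1$ to split the resulting colourings (hence the decompositions) into three equal classes $C_a',C_b',C_c'$, whereas you count all $9e$ colourings via the same interchanges and extract the factor of $3$ arithmetically from $6\cdot\#\{\text{decompositions}\}=9e$; both are routine once the lemma is in hand.
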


Theorem 2.2 was proved by K\'{a}szonyi (\cite{Ka}, \cite{Kz}) as follows.  Lemma 2.3 was proved by K\'{a}szonyi (\cite{Ka}, pages 124-125).  Lemma 2.4 was proved by K\'{a}szonyi (\cite{Kz}, page 35, paragraph after Theorem 4.3). The proofs given below for those lemmas are K\'{a}szonyi's arguments in a convenient form.\medskip

\begin{lem}If $G_e$ is given a colouring $f$ such that f($d_1$) = x or y, then $d_2$ $\notin$ K($d_1$,x,y).\medskip
\end{lem}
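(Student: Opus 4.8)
The plan is to argue by contradiction, producing from such a colouring $f$ a proper $3$-edge-colouring of $G$ itself, which is impossible by Context~1. Write $z$ for the colour of $C$ other than $x$ and $y$. Since both the claim and the chain $K(d_1,x,y)$ are symmetric in $x$ and $y$, and $f(d_1)\in\{x,y\}$, I may assume $f(d_1)=x$; suppose, for contradiction, that $d_2\in K:=K(d_1,x,y)$. By Definition~\ref{defn:Kempe chain}, $K$ is a cycle whose edges alternate between the colours $x$ and $y$, so $K$ has even length; it contains the two edges $d_1=(u_1,u_2)$ and $d_2=(v_1,v_2)$, and deleting these leaves two disjoint arcs of $K$. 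Assume first that $d_1$ and $d_2$ are non-adjacent on $K$, so each arc has at least one edge; after renaming the ends of $d_2$ if necessary, call the arcs $P$, running from $u_1$ to $v_1$, and $Q$, running from $u_2$ to $v_2$. Because $f(d_1)=x$ and $K$ alternates, the edge of $P$ at $u_1$ and the edge of $Q$ at $u_2$ are both coloured $y$; and since $|E(K)|=|E(P)|+|E(Q)|+2$ is even, $P$ and $Q$ have the same number of edges modulo $2$, which forces the edge of $P$ at $v_1$ and the edge of $Q$ at $v_2$ to share a colour $\gamma\in\{x,y\}$, with $f(d_2)$ the other member of $\{x,y\}$. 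Finally, observe that at every vertex of $K$ apart from $u_1,u_2,v_1,v_2$ the two incident $K$-edges carry $x$ and $y$, so the third incident edge is coloured $z$, and likewise the unique edge at each of $u_1,u_2,v_1,v_2$ not lying on $K$ is coloured $z$.

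The next step is to define a colouring $g$ of $G$. Recall $E(G)=\bigl(E(G_e)\setminus\{d_1,d_2\}\bigr)\cup\{(u,u_1),(u,u_2),(u,v),(v,v_1),(v,v_2)\}$. On $E(G_e)\setminus\{d_1,d_2\}$ set $g=f$, except on the edges of $P$, where $g$ is $f$ with the colours $x$ and $y$ interchanged. On the five remaining edges put $g(u,u_1)=y$, $g(u,u_2)=x$, $g(u,v)=z$, $g(v,v_1)=\gamma$, and $g(v,v_2)=\gamma'$, where $\gamma'$ is the member of $\{x,y\}$ distinct from $\gamma$. Then $g$ is proper, as one checks vertex by vertex: at interior vertices of $P$ the two chain edges stay oppositely coloured in $\{x,y\}$ and the off-chain edge is unchanged; at every vertex not lying on $P$ (in particular interior vertices of $Q$) nothing has changed; at $u$ the incident colours are $y,x,z$ and at $v$ they are $\gamma,\gamma',z$; and at $u_1,u_2,v_1,v_2$ the three incident edges of $G$ carry the colours $\{y,x,z\}$, $\{x,y,z\}$, $\{\gamma,\gamma',z\}$, $\{\gamma',\gamma,z\}$ respectively, by the bookkeeping of the previous paragraph. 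Thus $g$ properly $3$-edge-colours $G$, contradicting non-colourability of $G$, and hence $d_2\notin K(d_1,x,y)$.

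The idea is short; the step I expect to be fiddly is the colour bookkeeping: one must commit to which arc is $P$ and which end of $d_2$ is $v_1$, and then check that the colours forced at $u_1,u_2,v_1,v_2$ by the ``third edge is $z$'' observations mesh exactly with the values assigned to the five new edges — and it is precisely there that the evenness of $|E(K)|$ (equivalently, equal parity of the lengths of $P$ and $Q$) is used. The only other point to attend to is the degenerate configuration set aside above, where $d_1$ and $d_2$ are adjacent on $K$ (so one of $P,Q$ is a single vertex, forcing an identification such as $u_1=v_1$), including the extreme case $\{u_1,u_2\}=\{v_1,v_2\}$ in which $K$ is the $2$-cycle on $d_1$ and $d_2$. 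These are covered by the same construction, reading an empty arc as the instruction to change nothing there and to take $\gamma$ from the relation $f(d_2)=\gamma'$; alternatively, one checks that the construction still yields a colouring of $G$ in these cases, so such a configuration cannot occur when $G$ is non-colourable.
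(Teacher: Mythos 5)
Your argument is correct and is essentially the paper's own proof: both reinsert the edge $e$, split the Kempe cycle $K(d_1,x,y)$ into the two arcs on either side of $d_1$ and $d_2$, swap $x$ and $y$ on one arc, give $e$ the third colour $z$, and thereby 3-edge-colour $G$, a contradiction. Your version merely makes explicit the parity and endpoint bookkeeping (and the degenerate adjacency cases) that the paper leaves implicit.
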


\begin{proof}Suppose to the contrary that $d_2$ $\in$ K($d_1$, x, y).  Reinsert the edge $e$; this separates K($d_1$, x, y) into two paths $K_1$ and $K_2$.  On $K_1$, interchange the colours $x$ and $y$, and assign to $e$ the third colour $z$.  $G$ is now coloured and we have a contradiction.\medskip
\end{proof}

\begin{defn}
\label{defn:C'} Let $v$ be an endpoint of $d_2$, with $e_1$ and $e_2$ the other two edges adjoining $v$.  Let $C'(G_e)$=$\{f \in C(G_e):f(d_2,e_1,e_2)=abc\}$. Let $C_x'$=$\{f \in C'(G_e):f(d_1)=x\}$.\medskip
\end{defn}

\begin{lem}For some nonnegative integer J, \#$C_x'(G_e)=J$ $\forall$$x$$\in$$C$.\medskip
\end{lem}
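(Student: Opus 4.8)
The plan is to route everything through $C_a'(G_e)$: for each $x\in\{b,c\}$ I will construct an explicit bijection $\Phi_x\colon C_a'(G_e)\to C_x'(G_e)$, which yields $\#C_a'(G_e)=\#C_b'(G_e)=\#C_c'(G_e)$, and this common value is the desired $J$. Here $a,b,c$ are the colours fixed in Definition \ref{defn:C'}, so for every $f\in C'(G_e)$ we have $f(d_2)=a$, $f(e_1)=b$, $f(e_2)=c$. I will deliberately avoid constructing a direct bijection between $C_b'(G_e)$ and $C_c'(G_e)$; the reason appears at the end.

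The map $\Phi_x$ is a single Kempe recolouring. Given $f\in C_a'(G_e)$ --- so $f(d_1)=a$ and $f(d_2,e_1,e_2)=abc$ --- let $\Phi_x(f)$ be the colouring obtained from $f$ by interchanging the colours $a$ and $x$ along the Kempe chain $K(d_1,a,x)$ and leaving every other edge unchanged. A Kempe recolouring of a cubic graph again produces a legal colouring, and since $d_1\in K(d_1,a,x)$ we get $\Phi_x(f)(d_1)=x$. Everything then reduces to showing that none of $d_2$, $e_1$, $e_2$ lies on $K(d_1,a,x)$: granting that, $\Phi_x(f)$ agrees with $f$ on $(d_2,e_1,e_2)$, hence restricts to $abc$ there, so $\Phi_x(f)\in C_x'(G_e)$.

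To verify $d_2,e_1,e_2\notin K(d_1,a,x)$, let $x'$ be the colour in $\{b,c\}\setminus\{x\}$. The edge of $\{e_1,e_2\}$ coloured $x'$ carries a colour outside $\{a,x\}$, so it is not on the $a$--$x$ chain at all. Next, $d_2$ is coloured $a\in\{a,x\}$, but Lemma 2.3 (applicable since $f(d_1)=a\in\{a,x\}$) gives $d_2\notin K(d_1,a,x)$. Finally, suppose the edge of $\{e_1,e_2\}$ coloured $x$ --- call it $h$ --- lay on $K(d_1,a,x)$; following this chain through $h$ at the vertex $v$, the three edges at $v$ are $d_2,e_1,e_2$ with colours $a,b,c$, so the edge meeting $h$ at $v$ whose colour is $a$ is precisely $d_2$, and the chain therefore passes through $d_2$, contradicting the previous sentence. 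Hence $d_2,e_1,e_2\notin K(d_1,a,x)$. (This uses $d_1\notin\{e_1,e_2\}$, i.e.\ that $v$ is not an endpoint of $d_1$; that is automatic when $G$ is triangle-free --- in particular for every snark --- and in general one takes $v$ to be the endpoint of $d_2$ that is not an endpoint of $d_1$.)

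It remains to see that $\Phi_x$ is a bijection. Interchanging $a$ and $x$ along $K(d_1,a,x)$ does not change the edge set of that chain, so performing this recolouring twice is the identity; and rerunning the verification of the previous paragraph with an arbitrary $g\in C_x'(G_e)$ in place of $f$ (now $g(d_1)=x\in\{a,x\}$, still covered by Lemma 2.3) shows the same recolouring sends $C_x'(G_e)$ into $C_a'(G_e)$. Hence $\Phi_x$ has a two-sided inverse and is a bijection, which finishes the argument. The single delicate step --- and the only place the position of $v$ relative to $d_2$, via Lemma 2.3, is used --- is the claim $e_1,e_2\notin K(d_1,a,x)$; it is also what forces the detour through $C_a'(G_e)$, since a direct recolouring between $C_b'(G_e)$ and $C_c'(G_e)$ would have to run along $K(d_1,b,c)$, which may pass through $v$ via both $e_1$ and $e_2$ and so spoil the pattern $abc$.
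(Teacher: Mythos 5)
Your proof is correct and follows essentially the same route as the paper: Kempe interchanges along $K(d_1,a,b)$ and $K(d_1,a,c)$ giving bijections from $C_a'$ to $C_b'$ and to $C_c'$, with Lemma~2.3 guaranteeing $d_2$ is off the chain. Your argument is in fact more careful than the paper's, which silently assumes the swap preserves the colours of $e_1,e_2$ --- the point you verify by noting that a chain through $v$ would be forced to use $d_2$.
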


\begin{proof}By Lemma 2.2, if $f \in C_a'$, then $K(d_1,a,b)$ does not contain $d_2$.  Hence, we can interchange the colours $a, b$ along
$K(d_1,a,b)$ to obtain a colouring $f_b$ of $G_e$ with $f_b(d_1)=b$.
This gives us a 1-1 correspondence between $C_a'$ and $C_b'$.
Similarly, there is a 1-1 correspondence between $C_a'$ and $C_c'$.
Hence, $\#C_a'=\#C_b'=\#C_c'$.\medskip
\end{proof}

\begin{proof}[Proof of Theorem 2.2]
From Lemma 2.4, \#$C'(G_e)=3J$.  Since we have fixed the colours of
$d_2,e_1,$ and $e_2$, each colouring corresponds to a
3-edge-decomposition of $G_e$ by Lemma 2.1. Theorem 2.2 follows.\medskip
\end{proof}

\begin{cor}\# \{3-edge-colourings($G_e$)\}=18J.\medskip
\end{cor}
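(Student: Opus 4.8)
The plan is to deduce Corollary 2.5 directly from Theorem 2.2, exploiting the observation already recorded in the introduction: every 3-edge-decomposition of a simple, non-trivial quasi-cubic graph gives rise to exactly $3! = 6$ 3-edge-colourings. Since $G_e$ is cubic (hence quasi-cubic) and, being relevant only when it admits colourings, non-trivial, this principle applies to $G_e$.

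First I would make the $6$-to-$1$ correspondence precise. Given a 3-edge-decomposition of $G_e$ into classes $X_1, X_2, X_3$, each of the $6$ bijections $\sigma$ from $\{X_1,X_2,X_3\}$ to the colour set $C=\{a,b,c\}$ yields a colouring by assigning colour $\sigma(X_i)$ to every edge in $X_i$; adjacent edges lie in different classes, so no two adjacent edges receive the same colour, and the $6$ colourings so obtained are pairwise distinct because the $\sigma$ are. Conversely, any colouring $f$ of $G_e$ partitions the edges into the three (nonempty, since the graph has a vertex of degree $3$) colour classes $f^{-1}(a), f^{-1}(b), f^{-1}(c)$, which is a valid 3-edge-decomposition, and $f$ is recovered from that decomposition by the bijection it induces. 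Hence distinct decompositions produce disjoint blocks of colourings, and these blocks partition $C(G_e)$ into sets of size $6$.

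Combining this with Theorem 2.2, which asserts $\#\{\text{3-edge-decompositions}(G_e)\} = 3J$ for some nonnegative integer $J$, I get
\[
\#\{\text{3-edge-colourings}(G_e)\} \;=\; 6 \cdot \#\{\text{3-edge-decompositions}(G_e)\} \;=\; 6 \cdot 3J \;=\; 18J,
\]
with the same $J \in \Z_{\geq 0}$, which is exactly the claim.

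There is essentially no obstacle here: the statement is a routine bookkeeping consequence of Theorem 2.2. The only point requiring a moment's care is confirming that the correspondence is genuinely $6$-to-$1$ rather than an overcount — i.e.\ that the colour classes of a colouring are always nonempty (so that relabelling them gives $6$ truly different colourings) and that two different decompositions never share a colouring. Both follow immediately from $G_e$ having a vertex of degree $3$ and from the fact that a colouring determines its own colour classes, so the argument closes at once.
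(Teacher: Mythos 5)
Your argument is correct and is essentially the paper's own proof: the paper likewise derives the corollary by combining Theorem 2.2 with the $6$-to-$1$ correspondence between colourings and decompositions noted in the introduction. You simply spell out the bijection and the nonemptiness of colour classes in more detail than the paper does.
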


\begin{proof}For each 3-edge-decomposition of $G_e$, there are
exactly $3!=6$ 3-edge-colourings of $G_e$.\medskip
\end{proof}

\begin{defn}[The nonnegative integers $\psi$(G,e)]
\label{defn:Psi} Since J depends on the snark $G$ and the edge $e$,
we can define the number $\psi(G,e)$:=J.\medskip
\end{defn}

\begin{thm}[Parity Lemma]Let $H$ be any simple, colourable quasi-cubic
graph.  Consider any colouring of $H$.  The number of vertices of
$H$ that are not connected to an edge of given colour is even.
Equivalently, the number of univalent vertices of $H$ not connected
to an edge of given colour is even.\medskip
\end{thm}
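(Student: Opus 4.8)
The plan is to fix one of the three colours, say $a$, and to study the spanning subgraph $H_a$ of $H$ whose edges are exactly the edges coloured $a$. Since the given colouring is proper, no two edges of colour $a$ share an endpoint, so $H_a$ is a matching. In particular, the set of vertices of $H$ that are incident to an $a$-coloured edge has even cardinality, namely $2\,\#\{\text{edges of }H_a\}$, simply because each edge of the matching covers two distinct vertices.

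Next I would dispose of the $3$-valent vertices. At any $3$-valent vertex the three incident edges carry three distinct colours, so such a vertex is always incident to an edge of colour $a$. Hence the vertices of $H$ that are \emph{not} incident to an $a$-coloured edge are precisely the univalent vertices whose unique incident edge is coloured $b$ or $c$. This observation is exactly the equivalence stated in the lemma, and it reduces both formulations to one and the same count: the number of vertices of $H$ missed by the matching $H_a$.

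Now I would finish with a handshake argument. Let $m$ denote the number of edges of $H$. Since every vertex of $H$ has degree $1$ or $3$, reducing the identity $\sum_{v}\deg(v)=2m$ modulo $2$ gives $\#\{\text{vertices of }H\}\equiv 2m\equiv 0\pmod 2$, so $H$ has an even number of vertices. The number of vertices incident to an $a$-coloured edge is $2\,\#\{\text{edges of }H_a\}$, which is even, so the number of vertices \emph{not} incident to an $a$-coloured edge equals $\#\{\text{vertices of }H\}-2\,\#\{\text{edges of }H_a\}$, a difference of two even integers, hence even. By the reduction of the previous paragraph this is precisely the quantity claimed in the lemma, and the identical argument applies with $a$ replaced by $b$ or by $c$.

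There is no genuine obstacle here; the statement is essentially a counting identity. The only points that require a little care are the bookkeeping step that identifies the "bad" vertices with the vertices uncovered by the matching $H_a$ (which is what makes the two formulations equivalent), and the observation, via the handshake lemma, that a simple quasi-cubic graph necessarily has an even number of vertices. Once these are in place the parity conclusion is immediate.
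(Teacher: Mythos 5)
Your proof is correct and follows essentially the same route as the paper: the paper's Lemma 2.7 is your matching observation (covered vertices number twice the $a$-coloured edges), its Lemmas 2.8--2.9 are your handshake argument that $H$ has an even number of vertices, and the conclusion is the same subtraction of two even quantities. The only addition is your explicit justification of the ``equivalently'' clause via the fact that every trivalent vertex sees all three colours, which the paper states but does not spell out.
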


\begin{lem}The number of vertices of $H$ connected to an edge of
given colour $x$ is even.\medskip
\end{lem}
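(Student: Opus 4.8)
The plan is to reduce the statement to the elementary observation that a colour class is a matching, and then count endpoints.

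First I would fix any $3$-edge-colouring of $H$ (one exists, since $H$ is colourable) and let $M_x$ denote the set of edges that receive the colour $x$. Since the colouring is proper, no two edges of $M_x$ meet at a common vertex; that is, $M_x$ is a matching. In particular the two-element endpoint sets of distinct edges of $M_x$ are pairwise disjoint.

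Next I would observe that a vertex $v$ of $H$ is ``connected to an edge of colour $x$'' exactly when $v$ is incident to some edge of $M_x$, i.e.\ exactly when $v$ is an endpoint of an edge of $M_x$. By the previous paragraph the set of all such endpoints is a disjoint union of two-element sets, one for each edge of $M_x$, so the number of vertices connected to an edge of colour $x$ equals $2\,\# M_x$. This is even, which is the claim.

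There is no genuinely hard step here; the only point to handle explicitly is that properness of the colouring is precisely what forces the colour class to be a matching, so that the endpoint count involves no double counting. I would note in passing that the Lemma can alternatively be deduced from the Parity Lemma together with the handshake relation (every $3$-valent vertex is incident to an edge of each colour, so the vertices connected to colour $x$ are the $3$-valent vertices together with those univalent vertices whose single edge is coloured $x$, and a short parity computation then closes the argument), but the matching argument above is shorter and self-contained, so that is the one I would present.
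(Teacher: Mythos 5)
Your proof is correct and is essentially identical to the paper's: both arguments note that properness makes the colour class $x$ a matching, so its endpoints come in disjoint pairs and the count is twice the number of $x$-coloured edges. No further comment is needed.
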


\begin{proof}Exactly 2 vertices of $H$ are attached to each edge of
colour $x$.  Furthermore, no vertex can be attached to more than one
edge of colour $x$, so the number of vertices attached to an edge of
colour $x$ is twice the number of edges of colour $x$.\medskip
\end{proof}

\begin{lem}The number of vertex-edge connections of $H$ is
even.\medskip
\end{lem}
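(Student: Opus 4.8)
The plan is to prove this by the classical handshake (double-counting) identity, adapted to the simple quasi-cubic setting. First I would make the bookkeeping explicit: a ``vertex-edge connection'' is an incident pair consisting of an edge together with one of its endpoints, so the quantity to be shown even is $\sum_{v\in V(H)}\deg(v)$, where each $\deg(v)\in\{1,3\}$ since $H$ is quasi-cubic. The point is that this same collection of incident pairs can also be enumerated edge-by-edge rather than vertex-by-vertex.

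The key step is then the edge-side count. Because $H$ is simple it has no loops, so every edge of $H$ has two distinct endpoints and therefore contributes exactly two incident pairs to the total. Summing over all edges gives $\#\{\text{vertex-edge connections of }H\}=2\cdot\#\{\text{edges of }H\}$, which is manifestly even, and the lemma follows. Note that univalent vertices cause no difficulty: such a vertex still contributes its degree $1$ to the vertex-side tally while contributing one incident pair to the unique edge meeting it on the edge-side tally, so the two counts of the same set agree as they must.

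Alternatively—and this is likely the form the author intends, since it threads these small parity statements together toward the Parity Lemma—one can fix a colouring of $H$ and partition its edges into the three colour classes. For each colour $x$ the number of vertex-edge connections through colour-$x$ edges is twice the number of colour-$x$ edges, hence even (this is essentially the content of the preceding lemma, that the number of vertices meeting a colour-$x$ edge is even, together with the fact that no vertex meets two colour-$x$ edges). The total is the sum of these three even numbers, hence even. Either way there is no real obstacle; the only subtlety to flag is the use of simplicity (no loops), which is exactly what makes ``each edge has two endpoints'' literally correct.
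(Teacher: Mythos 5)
Your main argument is exactly the paper's proof: each edge of a simple graph contributes precisely two vertex-edge incidences, so the total is twice the number of edges and hence even. The proposal is correct and takes essentially the same approach (the alternative colour-class route you sketch is unnecessary but harmless).
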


\begin{proof}Each edge corresponds to exactly 2 unique vertex-edge connections.\medskip
\end{proof}

\begin{lem}The number of vertices of $H$ is even.\medskip
\end{lem}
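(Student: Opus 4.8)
The plan is to reduce this to Lemma 2.8 (the number of vertex-edge connections of $H$ is even), using the fact that in a quasi-cubic graph every vertex has odd degree. First I would partition the vertex set of $H$ according to valence: let $V_1$ denote the number of $1$-valent vertices of $H$ and $V_3$ the number of $3$-valent vertices. By the definition of ``quasi-cubic'' these are the only two possibilities, so the total number of vertices of $H$ equals $V_1 + V_3$, and it suffices to show this sum is even.

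Next I would count the vertex-edge connections of $H$ by grouping them according to the vertex they touch: each $1$-valent vertex contributes exactly $1$ such connection and each $3$-valent vertex contributes exactly $3$, so the number of vertex-edge connections of $H$ equals $V_1 + 3V_3$. By Lemma 2.8, this quantity is even.

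Finally, since $V_1 + 3V_3 = (V_1 + V_3) + 2V_3$, the integers $V_1 + 3V_3$ and $V_1 + V_3$ have the same parity; as the former is even, so is the latter. Therefore the number of vertices of $H$, namely $V_1 + V_3$, is even, which is what was to be proved.

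I do not anticipate a genuine obstacle here: this is the standard observation that a graph has an even number of odd-degree vertices, specialized to the situation in which \emph{every} vertex of $H$ has odd degree ($1$ or $3$). The only point that must not be glossed over is that the reduction really does use the oddness of both admissible valences — it is precisely because $3 \equiv 1 \pmod 2$ that one may pass from ``$V_1 + 3V_3$ even'' to ``$V_1 + V_3$ even.'' (Note also that colourability of $H$ plays no role in this particular lemma and need not be invoked.)
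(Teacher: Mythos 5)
Your proof is correct and follows essentially the same route as the paper: both reduce to Lemma 2.8 by identifying the number of vertex-edge connections with the sum of the (odd) valences, and then conclude that an even sum of odd terms forces an even number of terms. Your $(V_1+V_3)+2V_3$ manipulation is just a more explicit rendering of the paper's parity observation.
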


\begin{proof}Let $v_1,\dots,v_n$ be the vertices of H.  Let
$E(v_i)$ (either 1 or 3) be the number of edges connected to $v_i$. Then $\sum_{i=1}^n E(v_i)\equiv 0 \pmod 2$ by Lemma 2.8. But the sum of odd terms is even if and only if the number of terms being summed is even.  Hence n is even.\medskip
\end{proof}

\begin{proof}[Proof of Theorem 2.6]
The number of vertices of $H$ not connected to an edge of colour $x$ is precisely the number of vertices of $H$ (even by Lemma 2.9) minus the number of vertices of $H$ connected to an edge of colour $x$ (even by Lemma 2.7).  Theorem 2.6, the "Parity Lemma", follows.\medskip
\end{proof}

There is a well-known reformulation of the Parity Lemma in terms of the group $\mathbb{Z}_2$x$\mathbb{Z}_2$ = $\{(0,0),(0,1),(1,0),(1,1)\}$=$\{0,a,b,c\}$, with the group operation $+$ defined coordinate-wise in $\mathbb{Z}$ so that $a+a=0$, etc., $a+b=c, a+c=b, b+c=a$, $a+0=a$ , etc., and $a+b+c=0$.\medskip

\begin{defn}
Given a colouring $f$ of $H$, for each vertex $v_i$ of $H$, let $\varsigma(v_i)$ be the sum of the colours of edges that meet $v_i$.  Notice that for trivalent vertices of $H$, $\varsigma(v_i)=0$, since all three colours $a,b,c$ are included in the sum.\medskip
\end{defn}

\begin{thm}[Parity Lemma, A Reformulation]If $f$ is a colouring of $H$,
then $\sum_{i=1}^n \varsigma(v_i)=0$.  Since $\varsigma(v_i)=0$ for all trivalent
vertices $v_i$, then the sum in this reformulation needs to be taken
only over the univalent vertices of $H$.\medskip
\end{thm}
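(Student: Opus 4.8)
The plan is to deduce this reformulation directly from the Parity Lemma (Theorem 2.6). First I would record the observation already noted just after the definition of $\varsigma(v_i)$: every trivalent vertex $v_i$ of $H$ has $\varsigma(v_i)=a+b+c=0$, so the sum $\sum_{i=1}^{n}\varsigma(v_i)$ collapses to a sum taken over the univalent vertices of $H$ alone. This is exactly the second assertion of the statement, so it remains only to show that this restricted sum is $0$. For a univalent vertex $v_i$, $\varsigma(v_i)$ is simply the colour $f(e)$ of the unique edge $e$ meeting $v_i$, an element of $\{a,b,c\}$.

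Next I would group the univalent vertices according to the colour of their incident edge. Let $n_a$, $n_b$, $n_c$ be the number of univalent vertices of $H$ whose incident edge has colour $a$, $b$, $c$ respectively, and let $U=n_a+n_b+n_c$ be the total number of univalent vertices. Working in $\mathbb{Z}_2\times\mathbb{Z}_2$, where every element has order dividing $2$, we obtain
\[
\sum_{i=1}^{n}\varsigma(v_i)=n_a\,a+n_b\,b+n_c\,c=(n_a \bmod 2)\,a+(n_b \bmod 2)\,b+(n_c \bmod 2)\,c .
\]

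Now I would invoke the Parity Lemma (Theorem 2.6) in its ``univalent'' form: for each colour $x\in\{a,b,c\}$, the number $U-n_x$ of univalent vertices not connected to an edge of colour $x$ is even, hence $n_x\equiv U\pmod 2$ for every $x$. Thus $n_a$, $n_b$, $n_c$ all have the same parity, namely that of $U$. If $U$ is even, all three coefficients vanish modulo $2$ and the sum is $0$. If $U$ is odd, the sum equals $a+b+c$, which is $0$ by the definition of the group operation. In either case $\sum_{i=1}^{n}\varsigma(v_i)=0$, as claimed.

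The argument is short, and there is no real obstacle; the only point requiring care is the bookkeeping in $\mathbb{Z}_2\times\mathbb{Z}_2$ — in particular that each coefficient $n_x$ matters only modulo $2$, which is what makes the two‑case split on the parity of $U$ exhaustive. I would also remark that one can sidestep Theorem 2.6 altogether by a direct double count,
\[
\sum_{i=1}^{n}\varsigma(v_i)=\sum_{i}\ \sum_{e\ni v_i} f(e)=\sum_{e} 2\,f(e)=0,
\]
since each edge is counted once at each of its two endpoints and $2g=0$ for all $g\in\mathbb{Z}_2\times\mathbb{Z}_2$; but deriving it from Theorem 2.6 makes transparent that this statement genuinely is a reformulation of the Parity Lemma rather than an independent fact.
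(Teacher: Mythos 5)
Your proof is correct. There is nothing in the paper to compare it against: the reformulation is introduced with the phrase ``there is a well-known reformulation'' and is stated without proof, so your argument supplies a verification the paper omits. Both of your routes are sound. The derivation from Theorem 2.6 is the one that genuinely earns the name ``reformulation'': reducing to the univalent vertices, writing the sum as $(n_a \bmod 2)\,a+(n_b \bmod 2)\,b+(n_c \bmod 2)\,c$, and using $U-n_x$ even to force $n_a\equiv n_b\equiv n_c\equiv U\pmod 2$ is exactly the right bookkeeping, and the two-case split on the parity of $U$ (using $a+b+c=0$ in the odd case) is exhaustive and correct. The direct double count $\sum_i\varsigma(v_i)=\sum_e 2f(e)=0$ is the standard self-contained proof and is also valid, since every edge of $H$ has both endpoints among $v_1,\dots,v_n$ and every element of $\mathbb{Z}_2\times\mathbb{Z}_2$ has order dividing $2$; it is worth noting that this second argument does not even use that $f$ is a proper colouring or that $H$ is quasi-cubic, whereas the first argument leans on Theorem 2.6 and hence exhibits the logical equivalence the paper is implicitly claiming. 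Either version could be inserted verbatim where the paper currently relies on ``well-known.''
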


\begin{defn}[Cut Set, Minimal Cut Set]A cut set of a snark $G$ is a set of edges
$e_1,\dots,e_n$ for which the graph $G-{e_1}-\dots-{e_n}$ contains
two disjoint graphs.  A minimal (in the sense of inclusion) cut set
is a cut set $e_1,\dots,e_n$ with no proper non-empty subset
$e_i,\dots,e_j$ such that $e_i,\dots,e_j$ is a cut set.\medskip
\end{defn}

\begin{defn}[Parity Lemma on a Minimal Cut Set]The parity lemma is especially useful on a minimal cut set of $G_e$.  Consider a minimal cut set $e_1,\dots,e_n$ of a colourable snark minus an edge, $G_e$. Let $G_1$ and $G_2$ denote the two subgraphs of $G_e$ obtained by eliminating $e_1,\dots,e_n$ (recall the definition of edge elimination). If, instead of eliminating the edges $e_1,\dots,e_n$, we eliminate $G_1$, we see that this leaves us with a quasi-cubic graph whose univalent vertices are precisely those vertices $v_1,\dots,v_n$ attached to $e_1,\dots,e_n$.  Hence, for any minimal cut set $e_1,\dots,e_n$ of $G_e$, the sum of colours assigned to $e_1,\dots,e_n$ must be zero.\medskip
\end{defn}

\begin{defn}[Star]A star is a set of 5 vertices of $H$, $u_1,\dots,u_5$ together with 5 edges such that $(u_i, u_{i+2})$, $(u_i, u_{i+3})$ are edges, where $+$ denotes addition in $\mathbb{Z}_5$. Let $v_1,\dots,v_5$ be vertices (necessarily distinct from the $u_i$'s) such that $(u_i,v_i)$ are edges of $H$.\medskip
\end{defn}

\begin{thm}[Star Colouring Theorem] If $H$ is coloured with the set of three colours $C$:=$\{a,b,c\}$, then for some distinct choices $x,y,z$ of $\{a,b,c\}$, and some $i \in {1,\dots,5}$, then the colours of $(u_i,v_i),\dots,(u_{i+4},v_{i+4})$ are $x,y,x,x,z$ (respectively).\medskip
\end{thm}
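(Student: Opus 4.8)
The plan is to reduce everything to the colours of the five spoke edges $(u_i,v_i)$ and to a parity count on the $5$-cycle $Q$ formed by the five edges among the $u_i$ (the edges $(u_i,u_{i+2})$; note $Q$ visits $u_1,u_3,u_5,u_2,u_4$ and is a genuine $5$-cycle, as in the inner pentagram of the Petersen graph). Write $s_i$ for the colour of $(u_i,v_i)$. The one structural fact I need is that each $u_i$ is trivalent in $H$, meeting exactly the two $Q$-edges $(u_i,u_{i+2})$, $(u_i,u_{i-2})$ and the spoke $(u_i,v_i)$ — and a spoke cannot coincide with a $Q$-edge since $v_i\notin\{u_1,\dots,u_5\}$. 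Hence in the given colouring the two $Q$-edges at $u_i$ carry precisely the two colours of $\{a,b,c\}$ other than $s_i$. In particular the argument does not care whether the $v_i$ are distinct from one another.

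First I would determine the multiset of spoke colours. Fix a colour $\gamma$; let $n_\gamma$ be the number of spokes coloured $\gamma$ (so $n_a+n_b+n_c=5$) and $m_\gamma$ the number of $Q$-edges coloured $\gamma$. Double-count the pairs consisting of a vertex $u_i$ and a $Q$-edge at $u_i$ of colour $\gamma$: each $Q$-edge of colour $\gamma$ is counted at both its ends, giving $2m_\gamma$; and by the preceding paragraph the number of such edges at $u_i$ is $1$ when $s_i\neq\gamma$ and $0$ when $s_i=\gamma$, giving $5-n_\gamma$. So $2m_\gamma=5-n_\gamma$ — the same double-count that underlies the Parity Lemma — whence each $n_\gamma$ is odd. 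Since $n_a+n_b+n_c=5$ with each summand a positive odd integer, $\{n_a,n_b,n_c\}=\{3,1,1\}$: one colour, say $x$, occurs on three spokes and each of the other two colours on exactly one spoke, and moreover $m_x=(5-3)/2=1$.

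Finally I would pin down the cyclic arrangement; this is the step that genuinely uses the cycle $Q$ rather than mere parity, and I expect it to be the crux. Let $g$ be the unique $Q$-edge coloured $x$, and normalise notation by writing $g=(u_p,u_{p+2})$ (every $Q$-edge is $(u_j,u_{j+2})$ for a unique $j$, so this pins down $p$). Since the two $Q$-edges at a vertex avoid that vertex's spoke colour, $s_p\neq x$ and $s_{p+2}\neq x$; but only two spokes avoid the colour $x$ and they carry distinct colours (each non-$x$ colour is used exactly once), so $s_p$ and $s_{p+2}$ are the two non-$x$ colours while $s_{p+1}=s_{p+3}=s_{p+4}=x$. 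Taking $i:=p+1$, $y:=s_{p+2}$, $z:=s_p$ then gives $(s_i,s_{i+1},s_{i+2},s_{i+3},s_{i+4})=(x,y,x,x,z)$ with $x,y,z$ distinct, which is exactly what the theorem asserts. The only delicate point is this normalisation of $g$ (choosing which of its endpoints is $u_p$); everything else is the parity bookkeeping of the first two steps.
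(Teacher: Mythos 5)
Your proof is correct, and while the first half coincides in substance with the paper's, the second half takes a genuinely different route. The paper gets the $3$-$1$-$1$ colour distribution on the five spokes by invoking the Parity Lemma on the minimal cut set $(u_i,v_i),\dots,(u_{i+4},v_{i+4})$, and then rules out the only other cyclic arrangement (three consecutive $x$-spokes) by a local propagation argument: such a configuration forces the adjacent inner edges $(u_i,u_{i+3})$ and $(u_{i+1},u_{i+3})$ to receive the same colour. You instead run the parity double-count directly on the pentagram ($2m_\gamma = 5 - n_\gamma$), which not only re-derives the $3$-$1$-$1$ distribution without appealing to the cut-set machinery but also yields the extra fact $m_x=1$; you then use the \emph{unique} inner edge of colour $x$ to locate the two non-$x$ spokes positively (its two endpoints must be exactly the vertices whose spokes are coloured $y$ and $z$), rather than excluding the bad arrangement by contradiction. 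Your version buys three things: it is self-contained (no need for the $v_i$ to form a cut set, nor even to be pairwise distinct --- only $v_i\notin\{u_1,\dots,u_5\}$ is used); it produces the pattern $x,y,x,x,z$ constructively instead of leaving implicit the small combinatorial step that ``$3$-$1$-$1$ and not three consecutive'' forces the two singleton colours to sit at cyclic distance $2$; and it avoids the case-chase on the inner edges entirely. The paper's version, for its part, is the one that generalizes to arbitrary minimal cut sets of size $5$, which is how the Parity Lemma is used elsewhere in the argument.
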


\begin{proof}By the Parity Lemma applied to the minimal cut set $(u_i,v_i),\dots,(u_{i+4},v_{i+4})$, these edges must be coloured so that 3 edges have one colour, and the two other edges have unique colours. Hence, for some distinct colours $x,y,z$ of C, 3 edges must be coloured $x$, 1 edge must be coloured $y$, and 1 must be coloured $z$.  Furthermore, the 3 edges coloured $x$ cannot be consecutive (ie: of the form $(u_i,v_i)$, $(u_{i+1},v_{i+1})$, $(u_{i+2},v_{i+2})$), since this would lead to the adjacent edges $(u_i,u_{i+3})$ and $(u_{i+1},u_{i+3})$ receiving the same colour. Theorem 2.11 follows.\medskip
\end{proof}

Note that a colouring of the "outer edges" $(u_i,v_i),\dots,(u_{i+4},v_{i+4})$ of the star in this manner determines the colouring of the "inner edges" $(u_i, u_{i+2})$, $(u_i, u_{i+3})$.\medskip

\section{The Numbers $\psi(G,e)$}

The central question for this note is: for what numbers $n$ do there exist a snark $G$ containing an edge $e$ such that $\psi(G,e)=n$?  Our method of answering the question entails using well-known methods of building "bigger" snarks from "smaller" snarks in ways such that the number $\psi$ for the "big" snark and some edge of it is related in a specified way to the numbers $\psi$ for the smaller snarks and respective edges therein.  We can then proceed by an inductive process, starting with the Peterson graph, to build infinite families of snarks with respective edges for which we can determine the numbers $\psi$.\medskip

\section{The Symmetric Dot Product}
In \cite{Br}, Bradley gave a proof of Theorem 4.1.  His argument relies on a method for combining two snarks into a larger snark, the symmetric dot product, a special case of Isaacs' \cite{Is} general dot product.\medskip

\begin{thm}Let $i,j$ be nonnegative integers (possibly 0).  There exists a snark $G$ with an edge $e$ such that $\psi (G,e) = 2^i \cdot 3^j$.\medskip
\end{thm}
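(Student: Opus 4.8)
The plan is to proceed by induction, building the snarks with the desired $\psi$-values one prime factor at a time using the symmetric dot product. The base case is the Petersen graph $P$. One first computes, by direct examination of colourings, that for an appropriate edge $e_0$ of $P$ we have $\psi(P, e_0) = 1 = 2^0 \cdot 3^0$; indeed $P_{e_0}$ should have exactly $18$ colourings. (More precisely, I would pick $e_0$ and the associated vertex $v$ so that Definition \ref{defn:C'} applies, and count that $\#C'_x(P_{e_0}) = 1$.) This handles the case $i = j = 0$.

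\medskip

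For the inductive step, the crucial claim — which I expect the author establishes just before or after this theorem — is a multiplicativity/additivity formula for $\psi$ under the symmetric dot product. Concretely, if $G = G_1 \cdot G_2$ is the symmetric dot product of snarks $G_1$ and $G_2$, performed at edges $e_1 \in G_1$ and $e_2 \in G_2$, and $e$ is the appropriate surviving edge of $G$, then one expects a relation of the shape $\psi(G, e) = \psi(G_1, e_1) \cdot (\text{something determined by } G_2, e_2)$, where for suitable choices of $G_2$ the multiplier is exactly $2$ or exactly $3$. The mechanism behind such a formula is the Parity Lemma applied to the minimal cut set created by the dot product (the edges joining the two pieces): a colouring of $G_e$ restricts to colourings of the two sides whose boundary colour patterns are constrained, and counting the ways to glue compatible boundary patterns produces the multiplicative factor. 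So the strategy is: fix once and for all two small "gadget" snarks $H_2$ and $H_3$ (likely both copies of the Petersen graph, but dotted at different edges or combined in a slightly different way) such that dotting with $H_2$ multiplies $\psi$ by $2$ and dotting with $H_3$ multiplies $\psi$ by $3$. Then, starting from $(P, e_0)$ with $\psi = 1$, apply the $H_2$-dot operation $i$ times and the $H_3$-dot operation $j$ times, obtaining at the end a snark $G$ with an edge $e$ and $\psi(G,e) = 2^i \cdot 3^j$. At each stage one must check that the result is still a genuine snark (simple, cubic, girth $\geq 5$, cyclically $4$-edge-connected), which is exactly what Isaacs' \cite{Is} theory of the dot product guarantees for this construction.

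\medskip

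The main obstacle, and the heart of the argument, is proving the multiplicativity formula for $\psi$ under the symmetric dot product — in particular pinning down exactly what the multiplier is for the chosen gadgets and verifying it equals $2$ (resp.\ $3$). This requires a careful bijective bookkeeping: a $3$-edge-decomposition of $G_e$ must be decomposed into the data of a decomposition on the $G_1$-side together with a decomposition on the $G_2$-side, agreeing along the cut. The Parity Lemma restricts the boundary colourings severely (as in the Star Colouring Theorem, Theorem \ref{thm} — style reasoning), and one must count how many colourings on the $G_2$-side induce each admissible boundary pattern, then show this count is constant (equal to $2$ or $3$) across the patterns that actually arise, invoking the Kempe-chain interchange argument (Lemma \ref{lem}, Lemma \ref{lem}) to build the needed bijections between patterns. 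A secondary, more routine obstacle is tracking which edge $e$ of the composite graph to designate and verifying that the hypotheses of Theorem \ref{thm} (so that $\psi(G,e)$ is even defined) and of Lemma \ref{lem} (so that decompositions correspond to the relevant colourings) continue to hold after each dot product; this is bookkeeping but must be done with care since the edge subtraction and the dot product interact.

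\medskip

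In summary: base case is the Petersen graph with $\psi = 1$; inductive engine is a symmetric-dot-product multiplicativity lemma for $\psi$ with explicit multipliers $2$ and $3$; the deliverable is that iterating $i$ doublings and $j$ triplings yields a snark $G$ and edge $e$ with $\psi(G,e) = 2^i \cdot 3^j$, with Isaacs' results ensuring snarkhood is preserved throughout.
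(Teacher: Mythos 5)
Your proposal matches the paper's argument: the paper proves this exact statement by starting from $\psi(P,p)=1$ for the Petersen graph and inducting via the symmetric dot product, using the relations $\psi(G,e)=2\cdot\psi(\hat{G},e)\cdot\psi(G',E)$ and $\psi(G,d_1)=3\cdot\psi(G',(u,u_1))\cdot\psi(\hat{G},(U_1,U))$ (Theorems 4.2 and 4.3, whose proofs are deferred to Bradley's paper), which give your multipliers of exactly $2$ and $3$ when the auxiliary snark is the Petersen graph. Your sketch of how the multiplicativity is proved (Parity Lemma on the cut set plus Kempe-chain bijections) is consistent with the cited source, so this is essentially the same route.
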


\begin{defn}[The Symmetric Dot Product]Let $G'$ and $\hat{G}$ be two snarks.  Suppose $E=(U,V)$ is an edge of $G'$, and that $U_1,U_2,V_1,V_2$ are distinct vertices of $G'$ such that $(U_i,U), (V_i,V), \{i=1,2\}$ are edges of $G'$. Similarly, suppose $\epsilon=(u,v)$ is an edge of $\hat{G}$, and that $u_1,u_2,v_1,v_2$ are distinct vertices of $\hat{G}$ such that $(u_i,u), (v_i,v), \{i=1,2\}$ are edges of $\hat{G}$.\medskip

Let $G_0':=G'-\{U,V\}$ and $\hat{G}_0:=\hat{G}-\{u,v\}$.\medskip

Let G denote the simple, cubic graph consisting of $G_0'$, $\hat{G}_0$, two new vertices $T$ and $W$,  and the seven new edges $\omega$:=$(T,W)$, $d_1$:=$(U_1,u_1)$, $d'_2$:=$(U_2,T)$, $\hat{d}_2$:=$(T,u_2)$, $D'_1$:=$(V_1,W)$, $\hat{D}_1$:=$(W,v_1)$, and $D_2$:=$(V_2,v_2)$.\medskip

It is well-known that $G$ is a snark.  See, for example, \cite{Is}.\medskip
\end{defn}
 
\begin{thm}If $e$ is an edge of $\hat{G}_0$, then $e$ is also an edge of $G$, and $\psi(G,e)=2 \cdot \psi(\hat{G},e) \cdot \psi(G',E)$.\medskip
\end{thm}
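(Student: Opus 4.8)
The plan is to set up a bijection (or a controlled multiple-to-one correspondence) between colourings of $G_e$ and triples consisting of a colouring of $\hat{G}_e$ (the snark-minus-an-edge in $\hat{G}$ at $e$, which makes sense since $e$ is an edge of $\hat{G}_0$, hence of both $\hat{G}$ and $G$), a colouring of $G'_E$, and one binary choice. First I would fix the reference colouring class $C'$ from Definition~\ref{defn:C'} on $G_e$ at a vertex of the new edge $d_2$ coming from subtracting $e$; since $e$ lies in $\hat{G}_0$, the subtraction of $e$ from $G$ and from $\hat{G}$ happens in the same local piece, so $\hat{G}_0$ with $e$ subtracted and $G_0$ with $e$ subtracted differ only outside that piece. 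The quantity $\psi(G,e)$ counts colourings in $C_x'(G_e)$, and likewise $\psi(\hat G,e)$ counts colourings in $C_x'(\hat G_e)$; I would arrange the same local vertex/edge labels to be used for both so these restricted colouring sets are directly comparable.

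The core step is to analyze how a colouring of $G_e$ decomposes across the "junction" between the $G'_0$-side and the $\hat{G}_0$-side. Recall $G$ is built from $G_0' = G'-\{U,V\}$ and $\hat G_0 = \hat G - \{u,v\}$ by adding vertices $T,W$ and the seven edges $\omega, d_1, d_2', \hat d_2, D_1', \hat D_1, D_2$. In $G_e$ we have subtracted an edge $e$ living entirely inside $\hat G_0$, so the $G'_0$-part and the connection edges are untouched. I would consider the cut separating the $G_0'$-side from the rest: the edges crossing it are $d_1, d_2', D_1', D_2$ (the four "interface" edges touching $U_1,U_2,V_1,V_2$), and I would apply the Parity Lemma on this (minimal, by the cyclic connectivity) cut set to constrain their colours. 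A colouring of $G_e$ restricted to the $G_0'$-side, together with the two phantom edges reconstructing $E=(U,V)$, gives a colouring of $G'_E$ (or of $G'$ itself minus $E$); the parity constraint forces the colour pattern on $d_1, d_2', D_1', D_2$ into one of a small number of types, and for each type one checks it extends uniquely (after the edge-subtraction identifications) to a colouring on the $\hat G_0$-side that is itself a colouring of $\hat G_e$. The factor of $2$ should emerge from the edge $\omega=(T,W)$ and the two vertices $T,W$: once the colours on $d_2', \hat d_2$ at $T$ and on $D_1', \hat D_1$ at $W$ are determined by the two sides, the colour of $\omega$ is forced, but there is a residual two-fold freedom — I expect it to come from a Kempe-chain swap that is available because of Lemma~2.2/Lemma~2.4 (the two new edges from a subtraction never lie on a common $x,y$-Kempe chain), exactly the mechanism that produced the "$18J$" count in the first place. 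Concretely, I would identify an involution on $C'(G_e)$ that fixes the $G'$-data and the $\hat G$-data but toggles between the two admissible completions across $\omega$, and show it is fixed-point-free.

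Having established that each colouring in the relevant class $C_x'(G_e)$ yields a colouring in $C_x'(\hat G_e)$ (after normalizing the local reference data to agree) and a colouring of $G'_E$ in a fixed decomposition class, and that conversely every compatible pair lifts to exactly two colourings of $G_e$, I would conclude $\#C_x'(G_e) = 2 \cdot \#C_x'(\hat G_e) \cdot \#(\text{decompositions of } G'_E)$, i.e. $\psi(G,e) = 2\cdot \psi(\hat G,e)\cdot \psi(G',E)$, using Theorem~2.2 / Definition~\ref{defn:Psi} to translate the $G'$-count into $\psi(G',E)$. The main obstacle I anticipate is the bookkeeping at the junction: showing that the Parity-Lemma-admissible colour patterns on the interface edges $d_1, d_2', \hat d_2, D_1', \hat D_1, D_2$ are in clean correspondence with (colouring of $G'_E$) $\times$ (colouring of $\hat G_e$) $\times \{0,1\}$ — in particular that every such pattern really does come from a $G'$-colouring and a $\hat G$-colouring with no over- or under-counting, and that the two-fold multiplicity is uniform across all patterns rather than varying. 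This is where one must be careful that the cut is genuinely minimal (so the Parity Lemma gives the tight "$3+1+1$ or all-equal-forbidden" shape used in the Star Colouring Theorem argument) and that the edge-subtraction identifications on the $\hat G_0$-side match up with subtracting $e$ from $\hat G$ itself; the rest is a finite case check over the colour patterns at $T$ and $W$.
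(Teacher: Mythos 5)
The paper itself gives no proof of this theorem (it defers to Bradley \cite{Br}), so your proposal has to be judged on its own terms. The skeleton --- analyse a colouring of $G_e$ on the interface cut via the Parity Lemma, recognise the restriction to the $G_0'$-side as a colouring of $G'_E$ and the restriction to the $\hat G_0$-side as a colouring of $\hat G_e$, then count the gluings --- is sound and is essentially how the computation must go. But your account of the factor of $2$ is wrong, and it is not a cosmetic slip. Once a colouring of $G'_E$ and a colouring of $\hat G_e$ with matching boundary colours are fixed, the colour of $\omega$ is \emph{uniquely} forced: at $T$ it must be the third colour besides $f(d_2')$ and $f(\hat d_2)$, at $W$ the third colour besides $f(D_1')$ and $f(\hat D_1)$, and these two forced values coincide (both equal the colour $\epsilon$ carries in the corresponding colouring of $\hat G_e$, by parity on the cut around $\hat G_0$). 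So each compatible pair lifts to exactly \emph{one} colouring of $G_e$; the fixed-point-free involution you posit does not exist, and an attempt to construct it would stall.

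The factor of $2$ actually lives entirely in the normalisation on the $G'$-side. First, you need the non-colourability of $G'$ (not just parity) to pin down the interface pattern: parity permits $a,b,a,b$ on $d_1,d_2',D_1',D_2$, but that pattern would reconstitute a colouring of $G'$. Hence $f(d_1)=f(d_2')=:x$ and $f(D_1')=f(D_2)=:w$, and the $G_0'$-restriction is precisely a colouring of $G'_E$ with the two new edges coloured $x$ and $w$ (possibly $x=w$). By the Kempe-chain argument of Lemmas 2.3 and 2.4 applied to $G'_E$, the $18\,\psi(G',E)$ colourings of $G'_E$ distribute evenly over the $9$ ordered pairs $(x,w)$, giving $2\,\psi(G',E)$ for each pair --- and that $2$ is the whole constant: $18\,\psi(G,e)=\sum_{(x,w)}2\,\psi(G',E)\cdot\#\{h\in C(\hat G_e):h((u,u_1))=x,\ h((v,v_2))=w\}=2\,\psi(G',E)\cdot 18\,\psi(\hat G,e)$. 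Note also that your closing identity invokes $\#\{\text{3-edge-decompositions}(G'_E)\}$, which equals $3\,\psi(G',E)$ rather than $\psi(G',E)$; combined with the spurious doubling from the involution, the bookkeeping as written would land on $6\,\psi(\hat G,e)\,\psi(G',E)$ instead of $2\,\psi(\hat G,e)\,\psi(G',E)$.
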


\begin{thm}In the above context, $\psi (G, d_1) = 3 \cdot \psi (G',(u,u_1))\cdot \psi (\hat{G},(U_1,U))$.\medskip
\end{thm}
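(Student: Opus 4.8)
The plan is to count colourings of $G_{d_1}$, where $d_1 = (U_1, u_1)$ is the edge joining the two "halves" of the symmetric dot product $G$. First I would carry out the edge subtraction explicitly: deleting $d_1$ leaves $U_1$ and $u_1$ divalent, and subtracting merges the remaining edges. Since $U_1$ was joined in $G$ to $U_2$ (via the old structure of $G'$) and to $d_2' = (U_2,T)$—wait, more carefully, $U_1$ is joined in $G'_0$ to two vertices coming from $G'$ and $u_1$ is joined in $\hat G_0$ to two vertices coming from $\hat G$—the subtraction $G_{d_1}$ reconnects the two neighbours of $U_1$ inside $G'_0$ by a new edge, and the two neighbours of $u_1$ inside $\hat G_0$ by a new edge. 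Reinserting the picture of how $G'_0$ and $\hat G_0$ were obtained from $G'$ and $\hat G$ by edge elimination, I expect that $G_{d_1}$ decomposes, after the reconnections, into a copy of $(G')_{(U,U_1)}$ glued to a copy of $(\hat G)_{(u,u_1)}$—or more precisely that a colouring of $G_{d_1}$ is equivalent to a compatible pair of colourings of these two smaller "snark-minus-an-edge" graphs, where the compatibility is along the cut consisting of the edges $\omega = (T,W)$, $d_2' $/$\hat d_2$ at $T$, and $D_1'$/$\hat D_1$ at $W$, plus the surviving edge $D_2 = (V_2,v_2)$.

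The key step is then a Parity-Lemma bookkeeping argument on the relevant minimal cut set. The vertices $T$ and $W$ and the edge $\omega$ between them, together with the four edges $d_2', \hat d_2, D_1', \hat D_1$ and the edge $D_2$, form the interface between the two sides; I would identify the minimal cut set separating the $G'$-side from the $\hat G$-side in $G_{d_1}$ and apply Definition 2.13 (Parity Lemma on a minimal cut set) to force the colour pattern across it. Using the definition of $C'$ and $C_x'$ from Definition 2.3 (with an appropriately chosen vertex and its two incident edges playing the role of the normalizing triple), and Lemma 2.4, each side contributes a factor of $\psi$ of the corresponding smaller snark once we fix the normalizing colours; the remaining freedom in how the cut is coloured—subject to parity—produces the numerical coefficient. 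I expect the coefficient to come out to $3$ because, after normalizing one colour assignment on each side, there are exactly three colour-consistent ways to match the two sides across $T$, $W$, and $D_2$ (compare the factor $2$ in Theorem 4.4, which arose from a different interface edge). The precise identification of which triple of edges to normalize—so that counting $C_x'$ on $G_{d_1}$ really does factor as $\#C_x'$ on one side times $\#C_x'$ on the other times a constant—will require care.

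The main obstacle I anticipate is the same one that always arises with these dot-product constructions: verifying rigorously that the colourings of $G_{d_1}$ biject (up to the constant factor) with pairs of colourings of $(G')_{(U,U_1)}$ and $(\hat G)_{(u,u_1)}$, rather than with some twisted or partial version of that product. Concretely, I would need to check (i) that every colouring of $G_{d_1}$ restricts to legitimate colourings of each side's snark-minus-an-edge—this uses Lemma 2.2, that the two new subtraction edges $d_1, d_2$ on each side never lie on a common Kempe chain, so no obstruction is introduced by the reconnection—and (ii) that conversely any parity-compatible pair of side-colourings glues to a colouring of $G_{d_1}$. Once the bijection-up-to-constant is established, substituting $\#C_x' = \psi$ on each side via Corollary 2.5 and Definition 2.5, and tracking the constant through the interface count, yields $\psi(G,d_1) = 3 \cdot \psi(G',(u,u_1)) \cdot \psi(\hat G,(U_1,U))$, matching the claimed formula (noting the paper's labelling $(u,u_1)$ for the $G'$-edge and $(U_1,U)$ for the $\hat G$-edge, which I would double-check against the construction's vertex names).
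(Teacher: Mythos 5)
First, a point of comparison: the paper does not actually prove this theorem --- the remark immediately following it refers the reader to Bradley \cite{Br} for the full proofs of Theorems 4.2 and 4.3 --- so there is no in-paper argument to measure yours against, and your proposal must stand on its own. Its architecture is right (split $G_{d_1}$ along the interface left by the dot product, identify each side with the appropriate snark-minus-an-edge, count the gluings), but the two steps that actually produce the theorem are missing or wrong. The decisive parity step you omit is this: the three edges $d_2'=(U_2,T)$, $D_1'=(V_1,W)$, $D_2=(V_2,v_2)$ form the $3$-edge cut isolating the $G'$-side of $G_{d_1}$, so their colours sum to $0$ and are therefore all equal or all distinct; the all-equal case must be excluded (if all three have colour $x$, the adjacencies at $T$ and $W$ force $\hat d_2$ and $\hat D_1$ to share the single colour outside $\{x,\omega\}$, and then the cut $\{\hat d_2,\hat D_1,D_2\}$ sums to $x\neq 0$, violating the Parity Lemma). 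Only after this exclusion is the restriction of a colouring of $G_{d_1}$ to the $G'$-side literally a colouring of $(G')_{(U_1,U)}$, with the reinstated vertex $V$ serving as the common endpoint of the three cut edges (and likewise for $(\hat G)_{(u,u_1)}$ with $v$); your proposed bijection is not available without it. Note also that your description of the separating cut should be the three-edge set above, not the seven-edge interface including $\omega$, $\hat d_2$, $\hat D_1$.

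Second, your explanation of the coefficient --- ``there are exactly three colour-consistent ways to match the two sides'' --- is not correct. Normalize $F(d_2')=a$, $F(D_1')=b$, $F(D_2)=c$. Since $\hat d_2$ is adjacent to $d_2'$ at $T$ and $\hat D_1$ is adjacent to $D_1'$ at $W$, and $\{\hat d_2,\hat D_1,D_2\}$ must also be distinctly coloured, one is forced to $\hat d_2=b$, $\hat D_1=a$, $\omega=c$: there is exactly \emph{one} way to match the two sides, not three. The $3$ arises instead from the bookkeeping between $3$-edge-decompositions and $\psi$: with its three interface edges given fixed distinct colours, each side admits exactly $3\,\psi(G',(U_1,U))$ resp.\ $3\,\psi(\hat G,(u,u_1))$ colourings (this is precisely $\#C'$ from Theorem 2.2, taking $V$ resp.\ $v$ as the normalizing vertex), so the number of decompositions of $G_{d_1}$ is $9\,\psi(G',(U_1,U))\,\psi(\hat G,(u,u_1))$, and equating this with $3\,\psi(G,d_1)$ gives the factor $3$. (This computation also settles the labelling question you flagged: the product should read $\psi(G',(U_1,U))\cdot\psi(\hat G,(u,u_1))$; as printed, $(u,u_1)$ is not an edge of $G'$.) Without carrying out this count, the proposal does not establish the coefficient, which is the entire content of the theorem.
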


\begin{remark}The reader is referred to \cite{Br} for full proofs. One can use the fact that $\psi (P,p)=1$, where $P$ is the Peterson Graph and $p$ is any edge of the Peterson graph, to induct on the numbers $\psi(G,e)$.  Theorem 4.1 follows.\medskip
\end{remark}

\section{Superposition}

There is a second method of "hooking" snarks together to obtain new snarks: superposition.  We use a special case of a much more general method devised by Kochol \cite{Ko}.  This case of superposition involves hooking two Peterson graphs $P_1$ and $P_2$ together, then piecing them to a general snark $G_0$ to obtain a "big" snark $G$.  Theorems 5.1 and 5.3 give relationships between the numbers $\psi$ for certain edges in the original snark $G_0$ and the snark obtained by superposition, $G$.\medskip

\begin{defn}[Superposition]

We begin by hooking two Peterson graphs together as depicted in the following:\medskip

\objectmargin{0pt}
\[ \fbox{ \xygraph{ !{<0cm,0cm>;<1cm,0cm>:<0cm,1cm>::}
!{(0,1)}*+{}="P_1" !{(1,0)}*+{}="P_2" !{(3,0)}*+{}="P_3"
!{(5,0)}*+{}="P_4" !{(6,1)}*+{}="P_5" !{(1,1)}*+{\bullet_b}="a"
!{(2,1)}*+{\bullet_h}="b" !{(3,1)}*+{\bullet_c}="c"
!{(4,1)}*+{\bullet_i}="d" !{(5,1)}*+{\bullet_d}="e"
!{(1,2)}*+{\bullet}="f" !{(2,2)}*+{\bullet}="g"
!{(4,2)}*+{\bullet}="h" !{(5,2)}*+{\bullet}="i"
!{(1,3)}*+{\bullet}="j" !{(2,3)}*+{\bullet_g}="k"
!{(4,3)}*+{\bullet_j}="l" !{(5,3)}*+{\bullet}="m"
!{(1.5,3.5)}*+{\bullet}="x" !{(4.5,3.5)}*+{\bullet}="y"
!{(1,4)}*+{\bullet_a}="n" !{(1.5,4)}*+{\bullet_f}="o"
!{(4.5,4)}*+{\bullet_k}="p" !{(5,4)}*+{\bullet_e}="q"
"a"-@{->}"P_2"
"a"-"b" "b"-"c" "c"-@{->}"P_3" "c"-"d" "d"-"e" "e"-@{->}"P_4"
"a"-"f" "b"-"g" "d"-"h" "e"-"i" "f"-"x" "x"-"g" "g"-"j" "j"-"k"
"k"-"f" "k"-"l" "l"-"m" "m"-"h" "h"-"y" "y"-"i" "i"-"l" "n"-@/_/@{->}"P_1"
"n"-"o" "o"-"p" "p"-"q" "q"-@/^/@{->}"P_5" "n"-"j" "x"-"o" "y"-"p" "m"-"q"
} }  \]\medskip

The vertex-free edges connected to vertices $a,\dots,e$ will be joined to the snark $G_0$, as follows.\bigskip

Choose a path of five distinct vertices of $G_0$, $\{u_1,\dots,u_5\}$, so that $(u_1,u_2),\dots,(u_4,u_5)$ are edges of $G_0$. Let $\{v_2,v_3,v_4\}$ be vertices of $G_0$ (distinct from the $u_i$'s such that $(u_2,v_2)$, $(u_3,v_3)$, $(u_4,v_4)$ are edges of $G_0$ (the $v_i$'s are necessarily distinct from the $u_i$'s; otherwise, there would be a cycle of 4 or fewer edges in $G_0$).\medskip

\objectmargin{0pt}
\[ \fbox{ \xygraph{ !{<0cm,0cm>;<1cm,0cm>:<0cm,1cm>::}
!{(1,1)}*+{\bullet_{u_1}}="u_1" !{(1.5,1.5)}*+{}="v_1"
!{(1.5,.5)}*+{}="a" !{(0,3)}*+{\bullet_{u_2}}="u_2"
!{(1,3)}*+{\bullet_{v_2}}="v_2" !{(1.5,3.5)}*+{}="c"
!{(1.5,2.5)}*+{}="b" !{(3,6)}*+{\bullet^{u_3}}="u_3"
!{(3,5)}*+{\bullet_{v_3}}="v_3" !{(2.5,4.5)}*+{}="e"
!{(3.5,4.5)}*+{}="d" !{(6,3)}*+{\bullet_{u_4}}="u_4"
!{(5,3)}*+{\bullet_{v_4}}="v_4" !{(4.5,3.5)}*+{}="f"
!{(4.5,2.5)}*+{}="g" !{(5,1)}*+{\bullet_{u_5}}="u_5"
!{(4.5,1.5)}*+{}="v_5" !{(4.5,.5)}*+{}="i" "u_1"-@{->}"v_1"
"u_1"-@{->}"a" "u_1"-"u_2" "u_2"-"v_2" "v_2"-@{->}"b" "v_2"-@{->}"c"
"u_2"-"u_3" "u_3"-"v_3" "v_3"-@{->}"d" "v_3"-@{->}"e" "u_3"-"u_4"
"u_4"-"v_4" "v_4"-@{->}"f" "v_4"-@{->}"g" "u_4"-"u_5" "u_5"-@{->}"i"
"u_5"-@{->}"v_5" }} \]\medskip

We extend $G_0$ by eliminating the edges $(u_2,u_3)$, $(u_3,u_4)$, and replacing edges $(u_1,u_2)$, $(u_2,v_2)$, $(u_3,v_3)$, $(u_4,v_4)$, $(u_4,u_5)$ with $(u_1,a)$, $(v_2,b)$, $(v_3,c)$, $(v_4,d)$, $(u_5,e)$ (respectively).  This yields $G$.\medskip

\objectmargin{0pt}
\[ \fbox{ \xygraph{ !{<0cm,0cm>;<1cm,0cm>:<0cm,1cm>::}
!{(1,1)}*+{\bullet_{u_1}}="u_1" !{(1.5,1.5)}*+{}="v_1"
!{(0,1.5)}*+{}="w_1" !{(1.5,.5)}*+{}="a" !{(0,3.5)}*+{}="u_2"
!{(1,3)}*+{\bullet_{v_2}}="v_2" !{(1.5,3.5)}*+{}="c"
!{(1.5,2.5)}*+{}="b" !{(3,7)}*+{}="u_3"
!{(3,5)}*+{\bullet_{v_3}}="v_3" !{(2.5,4.5)}*+{}="e"
!{(3.5,4.5)}*+{}="d" !{(6,3.5)}*+{}="u_4"
!{(5,3)}*+{\bullet_{v_4}}="v_4" !{(4.5,3.5)}*+{}="f"
!{(4.5,2.5)}*+{}="g" !{(5,1)}*+{\bullet_{u_5}}="u_5"
!{(4.5,1.5)}*+{}="v_5" !{(4.5,.5)}*+{}="i" !{(6,1.5)}*+{}="w_5"
"u_1"-@{->}"v_1" "u_1"-@{->}"w_1" "u_1"-@{->}"a" "v_2"-@{->}"u_2"
"v_2"-@{->}"b" "v_2"-@{->}"c" "v_3"-@{->}"u_3" "v_3"-@{->}"d"
"v_3"-@{->}"e" "v_4"-@{->}"u_4" "v_4"-@{->}"f" "v_4"-@{->}"g"
"u_5"-@{->}"i" "u_5"-@{->}"v_5" "u_5"-@{->}"w_5"  } } \]

It is well known that $G$, pictured in full as follows, is a snark. See e.g. \cite{Ko}). The addendum includes a proof of the fact that the "superposition" operation preserves the cyclic 5-edge-connectededness of the original snark $G_0$.\medskip

\objectmargin{0pt}
\[ \fbox{ \xygraph{ !{<0cm,0cm>;<1cm,0cm>:<0cm,2cm>::}
!{(1.1,1)}*+{\bullet_{u_1}}="u_1" !{(1.55,1.2)}*+{}="v_1"
!{(1.55,.8)}*+{}="w_1" !{(1.1,1.6)}*+{\bullet_{v_2}}="v_2" !{(1.55,1.8)}*+{}="u_2" !{(1.55,1.4)}*+{}="w_2"
!{(3,2.2)}*+{\bullet_{v_3}}="v_3"
!{(2.6,1.7)}*+{}="u_3" !{(3.2,1.7)}*+{}="w_3"
!{(4.9,1.6)}*+{\bullet_{v_4}}="v_4"
!{(4.35,1.8)}*+{}="u_4" !{(4.35,1.4)}*+{}="w_4"
!{(4.9,1)}*+{\bullet_{u_5}}="u_5"
!{(4.35,1.2)}*+{}="v_5"
!{(4.35,.8)}*+{}="w_5"
!{(1.1,2.6)}*+{\bullet_b}="a"
!{(2.1,2.6)}*+{\bullet_h}="b" !{(3,2.6)}*+{\bullet_c}="c"
!{(3.9,2.6)}*+{\bullet_i}="d" !{(4.9,2.6)}*+{\bullet_d}="e"
!{(1.1,3)}*+{\bullet}="f" !{(2.1,3)}*+{\bullet}="g"
!{(3.9,3)}*+{\bullet}="h" !{(4.9,3)}*+{\bullet}="i"
!{(1.1,3.5)}*+{\bullet}="j" !{(2.1,3.5)}*+{\bullet_g}="k"
!{(3.9,3.5)}*+{\bullet_j}="l" !{(4.9,3.5)}*+{\bullet}="m"
!{(1.6,3.7)}*+{\bullet}="x" !{(4.4,3.7)}*+{\bullet}="y"
!{(.75,4)}*+{\bullet_a}="n" !{(1.8,4)}*+{\bullet_f}="o"
!{(4.2,4)}*+{\bullet_k}="p" !{(5.25,4)}*+{\bullet_e}="q"
"a"-"b" "b"-"c" "c"-"d" "d"-"e" "a"-"f" "b"-"g"
"d"-"h" "e"-"i" "f"-"x" "x"-"g" "g"-"j" "j"-"k" "k"-"f" "k"-"l" "l"-"m" "m"-"h" "h"-"y" "y"-"i" "i"-"l" "n"-"o" "o"-"p" "p"-"q" "n"-"j" "x"-"o" "y"-"p"
"m"-"q" "c"-"v_3" "v_2"-@/^3mm/"a" "u_1"-@/^7mm/"n" "v_4"-@/_3mm/"e" "u_5"-@/_7mm/"q"
"u_1"-@{->}"v_1" "u_1"-@{->}"w_1"
"v_2"-@{->}"u_2" "v_2"-@{->}"w_2"
"v_3"-@{->}"u_3" "v_3"-@{->}"w_3"
"v_4"-@{->}"u_4" "v_4"-@{->}"w_4"
"u_5"-@{->}"v_5" "u_5"-@{->}"w_5"
} }  \]\medskip

\end{defn}

\begin{thm}Let E:=$(v_3,c)$, and $\epsilon:=(u_3,v_3)$.  Then $\psi(G,E)=7 \cdot \psi(G_0,\epsilon)$.\medskip
\end{thm}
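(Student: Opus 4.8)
The plan is to compute the number of $3$-edge-colourings of $G_E$ and of $(G_0)_\epsilon$ by splitting colourings along a common small edge-cut, and then to invoke Corollary 2.5, which identifies these two numbers with $18\,\psi(G,E)$ and $18\,\psi(G_0,\epsilon)$ respectively (both $G$ and $G_0$ being snarks, hence non-colourable, so the framework of Context~1 applies).

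First I would pin down the common part. Subtracting $\epsilon=(u_3,v_3)$ from $G_0$ smooths $u_3$ (joining $u_2$ to $u_4$) and smooths $v_3$ (joining the two neighbours of $v_3$ in $G_0$ other than $u_3$); subtracting $E=(v_3,c)$ from $G$ smooths the gadget-vertex $c$ (joining its two neighbours inside the gadget) and smooths $v_3$ in exactly the same way, because in $G$ the edge $(u_3,v_3)$ has been replaced by $(v_3,c)$, so the two neighbours of $v_3$ other than $c$ are again the same pair of vertices. Hence $(G_0)_\epsilon$ and $G_E$ contain a common subgraph $B$ — namely $G_0$ with $u_2,u_3,u_4,v_3$ deleted and an edge added between the two former neighbours of $v_3$ that were not $u_3$ — carrying four pendant half-edges, at $u_1,v_2,v_4,u_5$. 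In $(G_0)_\epsilon$ these half-edges are attached to the tree $R$ consisting of the path $u_1,u_2,u_4,u_5$ together with pendant edges at $u_2$ (to $v_2$) and at $u_4$ (to $v_4$); in $G_E$ they are attached to the graph $Q$ obtained from the two-Petersen gadget by deleting $c$ and joining its two neighbours. In each of the two graphs these four pendant edges form a minimal cut set separating $B$ from $R$ (resp.\ from $Q$), since $B$, $R$, and $Q$ are each connected.

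Splitting a colouring along this cut then yields $18\,\psi(G,E)=\sum_{\alpha}B(\alpha)\,H(\alpha)$ and $18\,\psi(G_0,\epsilon)=\sum_{\alpha}B(\alpha)\,R(\alpha)$, where $\alpha$ runs over the $3^4$ colourings of the four cut-edges, $B(\alpha)$ is the number of colourings of $B$ inducing $\alpha$ on the cut (the \emph{same} quantity in both sums, as $B$ is literally the same graph with the same four pendant edges), and $H(\alpha)$, $R(\alpha)$ are the numbers of colourings of $Q$, resp.\ $R$, inducing $\alpha$. Hence it suffices to prove $H(\alpha)=7\,R(\alpha)$ for every $\alpha$. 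Since $R$ is a tree, a one-line propagation shows $R(\alpha)=1$ exactly when $\{\alpha_{u_1},\alpha_{v_2}\}=\{\alpha_{v_4},\alpha_{u_5}\}$ and these are two distinct colours — call such $\alpha$ \emph{admissible} — and $R(\alpha)=0$ otherwise; note every admissible $\alpha$ has zero colour-sum, consistent with the Parity Lemma applied to the cut.

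It remains to show $H(\alpha)=7$ for admissible $\alpha$ and $H(\alpha)=0$ for the rest. The Parity Lemma (Theorem 2.6) on the four-edge cut immediately forces $H(\alpha)=0$ unless $\alpha_{u_1}+\alpha_{v_2}+\alpha_{v_4}+\alpha_{u_5}=0$, so only the admissible patterns and the ``split'' patterns $\alpha_{u_1}=\alpha_{v_2}$, $\alpha_{v_4}=\alpha_{u_5}$ remain to be examined. For these one must analyse the $3$-edge-colourings of the fixed graph $Q$ directly, and this is the main (and essentially only computational) obstacle. The Star Colouring Theorem (Theorem 2.11) is what makes it tractable: each of the two Petersen graphs built into the gadget contributes a star (its inner pentagram together with its spokes), and the theorem pins the colours on that star's five outer edges to the pattern $x,y,x,x,z$ up to rotation and relabelling; after that the remaining colour choices propagate with very little freedom, and the symmetries of the gadget reduce the check to one representative admissible pattern and one representative split pattern. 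Carrying out this bounded case analysis, one finds that no split pattern extends to a colouring of $Q$ while every admissible pattern extends in exactly seven ways. Combining everything, $18\,\psi(G,E)=\sum_{\alpha\text{ admissible}}7\,B(\alpha)=7\sum_{\alpha}B(\alpha)\,R(\alpha)=7\cdot 18\,\psi(G_0,\epsilon)$, and therefore $\psi(G,E)=7\,\psi(G_0,\epsilon)$. (The degenerate possibilities — $B$ or $G_E$ failing to be simple and connected, or one of the two relevant neighbours of $v_3$ lying among the deleted vertices — do not arise for a path and pendant edges chosen as in the superposition construction inside a sufficiently large snark, and can be treated separately in the remaining small cases.)
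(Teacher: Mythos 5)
Your reduction is sound and is, at bottom, the same argument the paper gives, repackaged: the paper also normalizes the colours on the cut edges $(u_1,a)$, $(v_2,b)$ and matches colourings of $(G_0)_\epsilon$ with colourings of $G_E$ agreeing on the common part $B$ and on the four boundary edges. Your gluing identity $18\,\psi(G,E)=\sum_\alpha B(\alpha)H(\alpha)$ versus $18\,\psi(G_0,\epsilon)=\sum_\alpha B(\alpha)R(\alpha)$, together with the computation $R(\alpha)=1$ exactly on the ``admissible'' patterns, is a clean and correct way to organize this, and your identification of $B$ (including the fact that the edge created by smoothing $v_3$ is the same in both graphs) is right.

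The one place where your write-up stops short of a proof is precisely the step you flag as the main obstacle: you assert, but do not derive, that $H(\alpha)=7$ for admissible $\alpha$ and $H(\alpha)=0$ for the split patterns. The Parity Lemma on the four-edge outer cut only narrows the candidates; it does not produce the number $7$. The paper gets both facts from the \emph{interior} five-edge cut that isolates one half of the gadget, namely $(u_1,a)$, $(v_2,b)$ together with the three cross edges $(h,i)$, $(g,j)$, $(f,k)$ joining the two Petersen remnants. For the split patterns: if $(u_1,a)$ and $(v_2,b)$ received the same colour, parity on that cut would force the three cross edges to receive three distinct colours, and this yields a $3$-edge-colouring of the Petersen graph --- a contradiction (this is Lemma 5.2). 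For the count: with $(u_1,a)=x$ and $(v_2,b)=y$ distinct, parity on the same cut forces the three cross edges to have colours summing to $z$, and there are exactly $7$ such triples ($xxz$, $xzx$, $zxx$, $yyz$, $yzy$, $zyy$, $zzz$); the Star Colouring Theorem applied to the star inside each remnant then shows that each of these triples, together with the fixed boundary colours, extends in exactly one way to the whole gadget. Without this interior-cut argument (or an equivalent explicit case check on $Q$), the factor $7$ is unestablished, so you should either carry out the bounded case analysis you describe or substitute the parity-on-the-interior-cut derivation above.
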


\begin{lem}In any colouring of $G_E$, the edges $(u_1,a)$ and $(v_2,b)$ are assigned different colours.\medskip
\end{lem}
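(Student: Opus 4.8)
The plan is to argue by contradiction, using the fact that the ``left half'' of the superposition gadget is a copy of the Petersen graph with one vertex and one non-incident edge deleted, together with the Parity Lemma and the non-$3$-edge-colourability of the Petersen graph. First I would isolate the relevant part of the construction: let $L$ be the subgraph of $G$, and hence of $G_E$ (neither $E$ nor the vertex deleted in forming $G_E$ lies in it), induced by the nine vertices of the ``left Petersen fragment'' of the gadget. From the way the gadget is built, $L$ is a Petersen graph $P$ with one vertex $w$ deleted --- whose three neighbours in $P$ are the gadget vertices $f$, $g$, $h$ --- and with the edge of $P$ joining the gadget vertices $a$ and $b$ also deleted. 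Exactly five edges of $G_E$ run from $V(L)$ to the rest of $G_E$: the port edges $(u_1,a)$ and $(v_2,b)$, and the three edges $(f,k)$, $(g,j)$, $(h,i)$, of which $(f,k)$ and $(g,j)$ are crossing edges of the gadget while $(h,i)$ is the edge created when $c$ is removed in passing from $G$ to $G_E$. I would verify each of these incidences against the displayed pictures.

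Now suppose, for a contradiction, that a colouring $\varphi$ of $G_E$ satisfies $\varphi(u_1,a)=\varphi(v_2,b)=\gamma$. The restriction of $\varphi$ to $L$ is a proper colouring of $L$; at the vertex $a$ the two edges of $L$ carry the two colours different from $\gamma$ (the three edges at $a$ in $G_E$ are coloured distinctly, one of them being $(u_1,a)$), so $\gamma$ is the colour missing at $a$ in $L$, and likewise $\gamma$ is the colour missing at $b$ in $L$. Hence I may reinsert the deleted edge $(a,b)$ with colour $\gamma$ to obtain a proper $3$-edge-colouring of $L\cup\{(a,b)\}=P-w$. In this colouring the colours missing at $f$, $g$, $h$ are precisely $\varphi(f,k)$, $\varphi(g,j)$, $\varphi(h,i)$, since at each of $f$, $g$, $h$ the three edges of $G_E$ are coloured distinctly and two of the three lie in $P-w$.

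Next I would apply the Parity Lemma to the cut of $G_E$ that separates $L$ from the rest, namely $\{(u_1,a),(v_2,b),(f,k),(g,j),(h,i)\}$: since every vertex of $L$ is trivalent in $G_E$, the $\mathbb{Z}_2\times\mathbb{Z}_2$-sum of these five colours is $0$. As $\varphi(u_1,a)=\varphi(v_2,b)$, those two terms cancel, so $\varphi(f,k)+\varphi(g,j)+\varphi(h,i)=0$; but three non-zero elements of $\mathbb{Z}_2\times\mathbb{Z}_2$ can sum to $0$ only if they are pairwise distinct, hence a permutation of the three colours. So the colours missing at $f$, $g$, $h$ in $P-w$ are pairwise distinct, and reinserting $w$ with its three edges coloured accordingly yields a proper $3$-edge-colouring of $P$ --- impossible, since the Petersen graph is not $3$-edge-colourable. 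Therefore $\varphi(u_1,a)\neq\varphi(v_2,b)$.

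The step needing genuine care, and the one I expect to be the main obstacle, is the bookkeeping in the first paragraph: pinning down exactly which vertex $w$ and which edge $(a,b)$ of $P$ are deleted when $L$ is built, and checking that after the subtraction of $E$ the five edges of $G_E$ leaving $L$ attach precisely at $a$, $b$ (the ends of the deleted edge) and at $f$, $g$, $h$ (the neighbours of the deleted vertex). Everything afterwards is the one-line parity computation plus the classical non-$3$-edge-colourability of the Petersen graph.
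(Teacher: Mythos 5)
Your proposal is correct and follows essentially the same route as the paper: apply the Parity Lemma to the five-edge cut $\{(u_1,a),(v_2,b),(h,i),(g,j),(f,k)\}$ separating the left Petersen fragment, deduce that equal colours on $(u_1,a)$ and $(v_2,b)$ force $(h,i),(g,j),(f,k)$ to be distinctly coloured, and conclude that this would yield a $3$-edge-colouring of the Petersen graph. The only difference is that you spell out in full the reinsertion of the deleted edge $(a,b)$ and the deleted vertex $w$, which the paper compresses into the single phrase ``this leads to a colouring of $P$.''
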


\begin{proof}By the Parity Theorem applied to the minimal cut set consisting of $(u_1,a)$, $(v_2,b)$, $(h,i)$, $(g,j)$, $(f,k)$, we see that 3 of these 5 edges must share the same colour, and the other 2 must have unique colours.  Suppose $(u_1,a)$ and $(v_2,b)$ share the same colour.  Then $(h,i)$, $(g,j)$, $(f,k)$ are coloured distinctly. This leads to a colouring of $P$, the Peterson graph; contradiction. Lemma 5.2 follows.\medskip
\end{proof}

\begin{proof}[Proof of Theorem 5.1]

We will show that a 3-edge-decomposition of $(G_0)_\epsilon$ induces precisely 7 possible 3-edge-decompositions of $G_E$.\medskip

In order to count the 3-edge-decompositions of $G_E$, we fix the colours for edges $(u_1,a)$, $(v_2,b)$ as $x$ and $y$, respectively. For each 3-edge-decomposition of $G_E$, there will be exactly one such colouring of $G_E$ meeting that specification.  Likewise, in order to count the 3-edge-decompositions of $(G_0)_\epsilon$, we fix the colours of edges $(u_1,u_2)$ and $(u_2,v_2)$ as $x$ and $y$.\medskip

Consider such a colouring $f$ of $(G_0)_\epsilon$.  Since $f((u_1,u_2))=x$ and $f((u_2,v_2))=y$, we have $f((u_2,u_4))=z$, where z is the third colour of $C$.  Furthermore, $(u_4,v_4)$ and $(u_4,u_5)$ are coloured $x$ and $y$ (in some order), since they are mutually adjacent to an edge coloured $z$ (the edge $(u_2,u_4)$).\medskip

Now, $f$ induces a number of colourings $f_1, \dots, f_n$ of $G_E$ such that, for each $i\in\{1,\dots, n\}$:\medskip

$f_i(u_1, a)=f(u_1, u_2)=x$\smallskip
$f_i(v_2, b)=f(u_2, v_2)=y$ (as in the initial constraints)\smallskip
$f_i(v_4, d)=f(u_4, v_4)$\smallskip
$f_i(u_5, e)=f(u_4, u_5)$\smallskip
and $f_i(d)=f(d)$ for all edges d common to both $(G_0)_\epsilon$ and $G_E$.\smallskip

Note that, for each $f_i$, the minimal cut set $(u_1,a)$,$(v_2,b)$,$(v_4,d)$,$(v_5,e)$ is coloured either $x$, $y$, $x$, $y$ or $x$, $y$, $y$, $x$, so the sums of these edges are 0.\medskip

It is not difficult to see, using the Star Colouring Theorem applied to the "stars" arising from the original two Peterson graphs used in the construction, that each colouring $f_i$ is completely determined by the colourings for the three "top" edges $(h,i)$, $(g,j)$, $(f,k)$.\medskip

Keeping in mind that these three edges $(h,i)$, $(g,j)$, $(f,k)$, together with the edges $(u_1,a)$, $(v_2,b)$, form a minimal cut set of $G_E$, we see that\medskip

$f_i((u_1,a))+f_i((v_2,b))+f_i((e_1))+f_i((e_2))+f_i((e_3))=0$\medskip

Since $f_i((u_1,a))=x$ and $f_i((v_2,b))=y$, by assumption, it must be the case that $f_i((e_1))+f_i((e_2))+f_i((e_3))=z$.\medskip

There are precisely 7 colourings for the three edges $e_1, e_2, e_3$ satisfying this last constraint, namely those with $f_i(e_1 e_2 e_3)$=$xxz$, $xzx$, $zxx$, $yyz$, $yzy$, $zyy$, $zzz$. \medskip

It follows that each colouring $f$ of $(G_0)_\epsilon$ induces 7 colourings $f_1, \dots, f_7$ of $G_E$.\medskip

Furthermore, by an argument involving the Parity Lemma applied to the cut set consisting of edges $(u_1,a), (v_2,b), (v_4,d), (v_5,e)$, any colouring of $G_E$ meeting the specification given by Lemma 5.2 (with colours of $(u_1,a), (v_2,b)$ fixed as $x$ and $y$) will induce a unique colouring of $(G_0)_\epsilon$.  Hence, there will be no colourings of $G_E$ other than the ones induced by the colourings of $(G_0)_\epsilon$ (as above).\medskip

Theorem 5.1 follows.\medskip

\end{proof}

\begin{thm}Let F be an edge of $G_0$ other than one of the seven edges $(u_i,u_i+1)$, $i\in\{1,2,3,4\}$ or $(u_i,v_i)$, $i \in \{2,3,4\}$. Then $\psi(G,F)=5\cdot \psi(G_0,F)$\medskip
\end{thm}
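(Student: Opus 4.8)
The plan is to mimic the structure of the proof of Theorem 5.1, but now counting 3-edge-decompositions of $G_F$ and $(G_0)_F$ for an edge $F$ that lies safely away from the modified region. Since $F$ is not one of the seven edges altered in the superposition, $F$ is an edge of both $G_0$ and $G$, and the edge subtraction $G_F$ simply replaces $F$ by two new edges in a part of the graph disjoint from the two inserted Peterson gadgets; likewise $(G_0)_F$ makes the same local change. So $G_F$ is obtained from $(G_0)_F$ by exactly the same superposition surgery as $G$ from $G_0$, and in particular the five "interface" edges $(u_1,a)$, $(v_2,b)$, $(v_3,c)$, $(v_4,d)$, $(u_5,e)$ of $G_F$ correspond to the five "star edges" $(u_1,u_2)$, $(u_2,v_2)$, $(u_3,v_3)$, $(u_4,v_4)$, $(u_4,u_5)$ of $(G_0)_F$ — I would phrase this precisely and then quote the Star Colouring Theorem (Theorem 2.11) for the two stars coming from $P_1$ and $P_2$.

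First I would fix a 3-edge-decomposition of $(G_0)_F$; equivalently, using Lemma 2.1 on an adjacent pair, fix a colouring $f$ of $(G_0)_F$ with two chosen adjacent edges assigned fixed distinct colours, so that decompositions correspond bijectively to such colourings. The key step is to show that each such $f$ extends to exactly $5$ colourings $f_1,\dots,f_5$ of $G_F$ agreeing with $f$ off the inserted gadgets and inducing, on the interface, the colours $f$ assigns to the corresponding five star edges. By the Parity Lemma applied to the minimal cut set $(u_1,a),(v_2,b),(v_3,c),(v_4,d),(u_5,e)$ of $G_F$ (this is a minimal cut set for the same connectivity reason as in the construction), the colours on these five edges must be three of one colour and two singletons — and they are forced by $f$ to be a specific such pattern. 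The Star Colouring Theorem then tells us that, for each of the two Peterson-derived stars, the outer-edge colouring is (up to rotation and colour relabelling) of the form $x,y,x,x,z$, hence there are exactly $5$ rotations giving a valid assignment consistent with the fixed boundary colours; and by the remark after Theorem 2.11 the outer colouring of each star determines its inner edges. The combinatorics of matching up the two stars through the shared vertices $b,h,c,i,d$ (the labels along the "bottom" of the gadget) must be worked out so that the product collapses to a single factor of $5$ rather than $25$ — I expect this is where the real content is, and it should come out because the star from $P_1$ and the star from $P_2$ share three boundary edges, pinning them to rotate in lockstep.

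Conversely, I would show every colouring of $G_F$ satisfying the fixed boundary specification restricts to a well-defined colouring of $(G_0)_F$ (contract the two gadgets back to the path $u_1u_2u_3u_4u_5$ with pendant edges at $v_2,v_3,v_4$; the Parity Lemma on the same cut set guarantees the contracted boundary is consistently colourable as a path segment, exactly as in the last paragraph of the proof of Theorem 5.1). This gives a $5$-to-$1$ correspondence between colourings of $G_F$ with fixed boundary and colourings of $(G_0)_F$ with fixed boundary, hence between 3-edge-decompositions, so $\psi(G,F)=5\cdot\psi(G_0,F)$. The main obstacle, as noted, is the bookkeeping showing the two independent "factor of $5$" choices from the two stars are in fact a single synchronized choice; a clean way to see it is to observe that the three middle interface edges force the pattern of the three "top" edges $(h,i),(g,j),(f,k)$ (as in Theorem 5.1's proof these top edges carry the real freedom), and that this triple is shared between the two gadgets, so once it is fixed both stars are determined — leaving precisely the $5$ admissible choices for that shared triple consistent with the boundary sum being zero.
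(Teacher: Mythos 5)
Your overall framework --- fix the colours of two adjacent boundary edges, show each admissible colouring of $(G_0)_F$ extends to a definite number of colourings of $G_F$, and invert the correspondence via the Parity Lemma on the five-edge interface cut --- is exactly the paper's. But the step you yourself flag as ``where the real content is'' is precisely where your argument has a gap, and the resolution you sketch would give the wrong number. You locate the freedom in the triple of top edges and then assert there are ``precisely the 5 admissible choices for that shared triple consistent with the boundary sum being zero.'' The boundary-sum (Parity Lemma) constraint alone admits $7$ triples, not $5$ --- this is exactly the count in the proof of Theorem 5.1, namely $xxz$, $xzx$, $zxx$, $yyz$, $yzy$, $zyy$, $zzz$. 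If the parity constraint were the only restriction, you would conclude $\psi(G,F)=7\cdot\psi(G_0,F)$. Your appeal to ``5 rotations'' of the star pattern does not repair this: the Star Colouring Theorem constrains the shape of each star's outer colouring but does not by itself select $5$ of the $7$ parity-admissible triples.

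The missing idea is the difference between $G_E$ (Theorem 5.1) and $G_F$ (Theorem 5.3) at the gadget vertex $c$. In Theorem 5.1 the subtracted edge is $E=(v_3,c)$ itself, so $c$ disappears, $(h,c)$ and $(c,i)$ merge into $(h,i)$, and no residual constraint is imposed on the top triple; all $7$ parity-admissible colourings survive. In Theorem 5.3 the edge $(v_3,c)$ is still present in $G_F$, and its colour is already dictated by $f$: it equals $f((u_3,v_3))$, which is $x$ in one case and $y$ in the other. The relevant cut set is therefore $(u_1,a)$, $(v_2,b)$, $(c,h)$, $(g,j)$, $(k,f)$ (note $(c,h)$ rather than your $(h,i)$, which is not an edge of $G_F$ since $c$ has not been deleted), and the edge $(c,h)$, being adjacent to $(v_3,c)$, is forbidden from taking that fixed colour. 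This kills exactly $2$ of the $7$ parity-admissible triples in each case, leaving $5$. Without this adjacency argument your count of $5$ is unsupported. (Your worry about a spurious factor of $25$ is handled just as you guessed: once the triple on the cut is fixed, the colourings of the two Petersen remnants are forced, so only one factor appears.)
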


\begin{lem}In any colouring of $G_F$, the edges $(u_1,a)$ and $(v_2,b)$ are assigned different colours.\medskip
\end{lem}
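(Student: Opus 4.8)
The plan is to re-run the proof of Lemma 5.2 almost verbatim; the only genuinely new point is to check that the cut set used there survives the passage from $G_E$ to $G_F$. The key structural observation is that, since $F$ is an edge of $G_0$ that is \emph{not} one of the seven edges $(u_i,u_{i+1})$ ($i\in\{1,2,3,4\}$) or $(u_i,v_i)$ ($i\in\{2,3,4\}$), the edge subtraction producing $G_F$ is carried out entirely inside the $G_0$ part of $G$ and leaves the whole ``left gadget'' (the two Peterson copies together with their five attaching edges, the vertices $a,b,\dots$, and the internal edges $(h,i),(g,j),(f,k)$) completely untouched. In particular, all five edges $(u_1,a)$, $(v_2,b)$, $(h,i)$, $(g,j)$, $(f,k)$ are still edges of $G_F$, and none of them is $F$ or is created or destroyed by the subtraction.

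First I would argue that these five edges still form a minimal cut set of $G_F$. In $G$ they form a minimal cut set one of whose sides is a copy of the Peterson graph $P$ (more precisely, a subgraph whose colourings correspond to colourings of $P$), and that side lies entirely within the left gadget, hence is disjoint from $G_0$. Since the $F$-subtraction is a purely local modification inside $G_0$, it cannot introduce any edge between the two sides of the cut and cannot alter the Peterson side at all; so the same five edges separate $G_F$ into the Peterson piece and the remainder, and minimality is inherited for the same reason.

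With the cut set in hand, I would finish exactly as in Lemma 5.2. Apply the Parity Theorem to this minimal cut set in an arbitrary colouring of $G_F$: three of the five edges must share one colour and the other two receive the remaining two colours. If $(u_1,a)$ and $(v_2,b)$ were assigned the same colour $x$, then $x$ would be the colour occurring three times, so exactly one of $(h,i),(g,j),(f,k)$ is coloured $x$ and the other two get the other two colours; i.e.\ the three ``top'' edges are coloured with three distinct colours. As in the proof of Lemma 5.2 this yields a proper $3$-edge-colouring of $P$, contradicting the fact that $P$ is a snark. Hence $(u_1,a)$ and $(v_2,b)$ receive different colours.

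I expect the only real obstacle to be making the ``cut set survives'' step airtight --- that is, convincing oneself that the $F$-subtraction, being confined to $G_0$ and away from the seven distinguished edges, genuinely cannot reconnect the two sides of the cut or modify the Peterson piece. Everything downstream of that is identical to the proof of Lemma 5.2.
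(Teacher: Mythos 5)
Your overall strategy is exactly the paper's: the paper's proof of this lemma is literally ``Similar to that of Lemma 5.2,'' i.e., apply the Parity Lemma to the five-edge minimal cut set around the remnant of the first Peterson graph, observe that the colours must fall in a $3$--$1$--$1$ pattern, and note that if $(u_1,a)$ and $(v_2,b)$ agreed then the three ``top'' edges would receive three distinct colours and would complete to a $3$-edge-colouring of the Peterson graph $P$, a contradiction. That closing portion of your argument is fine.

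However, the step you single out as the ``key structural observation'' is false as stated: $(h,i)$ is \emph{not} an edge of $G$, and hence not an edge of $G_F$. In $G$ the bottom row of the gadget is the path $b-h-c-i-d$, and $c$ is additionally joined to $v_3$. The edge $(h,i)$ exists only in $G_E$: it is \emph{created} by the subtraction of $E=(v_3,c)$, which deletes the vertex $c$ and splices its remaining neighbours $h$ and $i$ together. Since $F$ lies in $G_0$, the subtraction producing $G_F$ leaves $c$ in place; moreover there is no ``passage from $G_E$ to $G_F$'' for a cut set to survive --- both graphs are obtained from $G$ by different subtractions, so the cut set must be read off from $G$ itself. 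The correct cut set in $G_F$ is $(u_1,a)$, $(v_2,b)$, $(c,h)$, $(g,j)$, $(f,k)$, with $c$ kept on the $G_0$ side of the cut; this is precisely the cut set the paper uses later in the proof of Theorem 5.3. With that one substitution your argument goes through unchanged, since from the point of view of the left Peterson remnant the dangling edge at $h$ plays the same role whether its other endpoint is $i$ or $c$.
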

\begin{proof}Similar to that of Lemma 5.2.\medskip
\end{proof}

\begin{proof}[Proof of Theorem 5.3]
In order to count the 3-edge-decompositions of $G_F$, we fix the colours for edges $(u_1,a)$, $(v_2,b)$ as x and y.  For each 3-edge-decomposition of $G_F$, there will be exactly one such colouring of $G_F$ meeting that specification.  Similarly, in order to count the 3-edge-decompositions of$(G_0)_F$ the colours of edges $(u_1,u_2)$ and $(u_2,v_2)$ will be fixed as x and y.\medskip

We will show that each colouring $f$ of $(G_0)_F$ satisfying the above constraint induces precisely 5 colourings of $G_F$.\medskip

Consider such a colouring $f$ of $(G_0)_F$.  Since $f((u_1,u_2))=x$ and $f((u_2,v_2))=y$, we have $f((u_2,u_3))=z$, where $z$ is the third colour of C, so that $(u_3,v_3),(u_3,u_4)$ are coloured (resp.) $x, y$ ['Case A'] or $y, x$ ['Case B'], and that $(u_4,v_4), (u_4,u_5)$ are coloured (resp.) $x, z$ or $z, x$ [in Case A] or $y, z$ or $z, y$ [in Case B].\medskip

Now, $f$ induces a number of colourings $f_1,\dots,f_n$ of $G_F$ such that, for each $i \in \{1,\dots,n\}$:\medskip

$f_i((u_1,a))=f((u_1,u_2))=x$, $f_i((v_2,b))=f((u_2,v_2))=y$, $f_i((v_3,c))=f((u_3,v_3))$, $f_i((v_4,d))=f((u_4,v_4))$, $f_i((u_5,e))=f((u_4,u_5))$, and $f_i(d)=f(d)$ for all edges d common to both $G_F$ and $(G_0)_F$.\medskip

Note that, in each case, the colours given to the minimal cut set $(u_1,a)$, $(v_2,b)$, $(v_3,c)$, $(v_4,d)$, $(u_5,e)$ sum to 0.\medskip

Likewise, by applying the Parity Lemma to the minimal cut set $(u_1,a)$, $(v_2,b)$, $(c,h)$, $(g,j)$ and $(k,f)$, we see that $f_i(u_1,a)+f_i(v_2,b)+f_i(c,h)+f_i(g,j)+f_i(k,f)=0$, so that $f_i(c,h)+f_i(g,j)+f_i(k,f)=z$.\medskip

As in the proof of Theorem 5.1, there are precisely 7 possibilities for colourings of $(c,h), (g,j), (k,f)$ satisfying this constraint: $xxz$, $xzx$, $zxx$, $yyz$, $zyz$, $zzy$, $zzz$. However, for each colouring $f_i$, edge $(v_3,c)$ is coloured either $x$ or $y$ (Case A or Case B, respectively), so edge $(c,h)$ cannot be coloured (resp.) $x$ or $y$. In each case, only 5 of the 7 colouring possibilities for $(c,h)$, $(g,j)$, $(k,f)$ satisfy this constraint.\medskip

Each of these choices of colourings of the minimal cut set $(u_1,a)$, $(v_2,b)$, $(c,h)$, $(g,j)$ and $(k,f)$, lead to unique colourings of the remnants of the top two Peterson graphs and the remainder of $G_F$.  Hence, each colouring $f$ of $(G_0)_F$ induces 5 colourings $f_1,\dots, f_5$ of $G_0$.\medskip

Furthermore, by an argument involving the Parity Lemma applied to the minimal cut set consisting of edges $(u_1,a)$, $(v_2,b)$, $(v_3,c)$, $(v_4,d)$, $(u_5,e)$, each colouring of $G_F$ satisfying the specification in Lemma 5.4 leads to precisely one colouring of $(G_0)_F$ - with edges $(u_1,u_2)$, $(u_2,v_2)$, $(u_3,v_3)$, $(u_4,v_4)$, and $(u_4,u_5)$ assigned the colours of (respectively) $(u_1,a)$, $(v_2,b)$, $(v_3,c)$, $(v_4,d)$, $(u_5,e)$.  Hence, there will be no colourings of $G_F$ other than the ones induced by the colourings of $(G_0)_F$ (as above).\medskip

Theorem 5.3 follows.
\end{proof}

\begin{thm}Let $i, j$ be nonnegative integers, and let $n=5^i \cdot 7^j$. There exists a cyclically 5-edge-connected snark $G$ with an edge $e$ such that $\psi(G,e)=n$.
\end{thm}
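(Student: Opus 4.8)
The plan is to prove the statement by induction on $i+j$, starting from the Petersen graph and repeatedly applying the superposition operation through Theorems 5.1 and 5.3. The inductive hypothesis will be: for every pair $(a,b)$ of nonnegative integers with $a+b\le i+j$ there is a cyclically $5$-edge-connected snark $G_0$ with an edge $e_0$ such that $\psi(G_0,e_0)=5^a\cdot 7^b$. For the base case $a=b=0$, take $G_0=P$ the Petersen graph and $e_0=p$ any edge of $P$; recall (as used already in Section~4) that $\psi(P,p)=1=5^0\cdot 7^0$, and that $P$ is cyclically $5$-edge-connected. Since the superposition of snarks is again a snark, and hence a graph of girth at least $5$, and since (by the argument in the addendum) superposition preserves cyclic $5$-edge-connectivity, the class of graphs in which we work is closed under the two moves below; the only thing to check is that the combinatorial data required by Theorems 5.1 and 5.3 can always be arranged around a prescribed edge.

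\emph{Multiplying by $5$.} Suppose $\psi(G_0,e_0)=5^a\cdot 7^b$. Choose a valid superposition datum: a path $u_1,\dots,u_5$ of distinct vertices of $G_0$ together with pendant vertices $v_2,v_3,v_4$, arranged so that $e_0$ is not one of the seven edges $(u_1,u_2),(u_2,u_3),(u_3,u_4),(u_4,u_5),(u_2,v_2),(u_3,v_3),(u_4,v_4)$. This is possible because $G_0$ is a cubic graph on at least ten vertices, so there is room to place such a datum while keeping $e_0$ clear of it (for $G_0=P$ one writes down an explicit datum and invokes edge-transitivity of $P$). Let $G$ be the resulting superposition. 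Then $e_0$ is still an edge of $G$, and Theorem 5.3 with $F:=e_0$ gives $\psi(G,e_0)=5\cdot\psi(G_0,e_0)=5^{a+1}\cdot 7^b$, with $G$ a cyclically $5$-edge-connected snark.

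\emph{Multiplying by $7$.} Suppose $\psi(G_0,e_0)=5^a\cdot 7^b$ and write $e_0=(p,q)$. Build a superposition datum with $e_0$ playing the role of $\epsilon=(u_3,v_3)$: set $u_3:=p$, $v_3:=q$, let $u_2,u_4$ be the two neighbours of $p$ other than $q$, and let $\{u_1,v_2\}$ and $\{u_5,v_4\}$ be the two pairs of neighbours of $u_2$ and of $u_4$, respectively, other than $p$. Because $G_0$ has girth at least $5$, the eight vertices $u_1,\dots,u_5,v_2,v_3,v_4$ are pairwise distinct — any coincidence among them would force a cycle of length $3$ or $4$ through $p$ — so this is a legitimate datum. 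Letting $G$ be the superposition and $E:=(v_3,c)$ the edge furnished by Theorem 5.1, we obtain $\psi(G,E)=7\cdot\psi(G_0,e_0)=5^a\cdot 7^{b+1}$, again with $G$ a cyclically $5$-edge-connected snark; note that $E$ is an ordinary edge of the snark $G$, hence available to play either role at the next stage.

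Starting from $(P,p)$ and applying the "$\times 5$" move $i$ times and the "$\times 7$" move $j$ times, in any order, produces a cyclically $5$-edge-connected snark $G$ with an edge $e$ for which $\psi(G,e)=5^i\cdot 7^j=n$. The step I expect to require the most care is precisely this bookkeeping: verifying that at \emph{every} stage the current distinguished edge can be fit into a superposition datum in the position needed — as an $F$-type edge by steering the path away from it, or as an $\epsilon=(u_3,v_3)$-type edge by centring the path on it — and that each newly produced distinguished edge is of a type permitting the induction to continue. All of this reduces to the girth-$\ge 5$ property, which persists because superposition of snarks is a snark; preservation of cyclic $5$-edge-connectivity is the content of the addendum and may be cited.
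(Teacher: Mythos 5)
Your proposal is correct and follows essentially the same route as the paper: induction from the Petersen graph (using $\psi(P,p)=1$), with Theorem 5.3 supplying the factor of $5$ and Theorem 5.1 the factor of $7$, and the Appendix (Lemma A7) cited for preservation of cyclic $5$-edge-connectedness. The paper states this only as a two-sentence sketch; your version adds the bookkeeping about placing the distinguished edge relative to the superposition datum, which the paper leaves implicit.
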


\begin{proof}Using (as a starting point) the fact that $\psi(P,p)=1$, where $P$ is the Peterson graph and $p$ is any edge therein, we can induct using repeated applications of Theorem 5.1 and 5.3 to obtain Theorem 5.5.  At each step of the induction, the cyclic 5-edge-connectedness is preserved (by Lemma A7 of the Appendix).
\end{proof}

\begin{cor}Let $i, j, k, l$ be nonnegative integers, and let $n=2^i \cdot 3^j \cdot 5^k \cdot 7^l$.  Then there exists a (ordinary, at least cyclically 4-edge-connected) snark $G$ with an edge $e$ such that $\psi(G,e)=n$.\medskip
\end{cor}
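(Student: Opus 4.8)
The plan is to combine the two families of results already established: Theorem 4.1 (which produces, for any nonnegative integers $i,j$, a snark $G'$ with an edge $E$ such that $\psi(G',E)=2^i\cdot 3^j$) and Theorem 5.5 (which produces, for any nonnegative integers $k,l$, a cyclically 5-edge-connected snark $H$ with an edge $e$ such that $\psi(H,e)=5^k\cdot 7^l$). The idea is to glue these two snarks together using the symmetric dot product, picking the edges so that Theorem 4.2 multiplies the two $\psi$-values and introduces only a harmless extra factor.

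Concretely, I would first invoke Theorem 5.5 to obtain a snark $H$ and an edge $e$ of $H$ with $\psi(H,e)=5^k\cdot 7^l$; this will play the role of the snark whose edge survives into the combined graph. Then I would invoke Theorem 4.1 to obtain a snark $G'$ with an edge $E'$ such that $\psi(G',E')=2^i\cdot 3^{j-1}$ when $j\ge 1$ (and handle the case $j=0$ separately). Now form the symmetric dot product $G$ of $G'$ and $\hat G:=H$, choosing the edge $E$ of $G'$ used in the construction to be $E'$, and choosing the edge $\epsilon$ of $\hat G$ used in the construction to be \emph{any} edge other than $e$ — one must check that $e$ then lies in $\hat G_0=\hat G-\{u,v\}$, i.e. that $e$ is not one of the two edges removed nor one of the four edges incident to the removed vertices that get deleted; since we are free to choose where in $H$ to perform the dot product, and $H$ has many edges, such a choice exists. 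By Theorem 4.2, $e$ is an edge of $G$ and
\[
\psi(G,e) = 2\cdot\psi(\hat G,e)\cdot\psi(G',E) = 2\cdot 5^k\cdot 7^l\cdot 2^i\cdot 3^{j-1},
\]
which is not quite $2^i\cdot 3^j\cdot 5^k\cdot 7^l$. To fix the factor, I would instead route the extra factor of $3$ through Theorem 4.3 — or, more cleanly, absorb the leading $2$ by starting Theorem 4.1's induction one step earlier, i.e. take $\psi(G',E')=2^{i-1}\cdot 3^j$ when $i\ge 1$, giving $\psi(G,e)=2\cdot 2^{i-1}\cdot 3^j\cdot 5^k\cdot 7^l = 2^i\cdot 3^j\cdot 5^k\cdot 7^l$ exactly. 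The boundary cases $i=0$ require a small separate treatment: when $i=0$ one cannot simply drop a factor of $2$, so one would either iterate the dot product differently or observe that Theorem 4.3 supplies a factor of $3$ instead of $2$ and combine accordingly; alternatively, if $i=0$ and $j\ge 1$, start from $\psi(G',E')=2^0\cdot 3^{j-1}$ and use Theorem 4.3-style gluing to pick up the missing $3$, and if $i=j=0$ then $n=5^k\cdot 7^l$ and Theorem 5.5 already finishes the job.

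The main obstacle I anticipate is \emph{not} the arithmetic but the bookkeeping about which edge survives and where the dot product may legally be performed: one must verify that the edge $e$ realizing $5^k\cdot 7^l$ in $H$ can be kept disjoint from the six edges of $H$ that are destroyed or deleted by the dot-product operation, and that the resulting graph $G$ is genuinely a snark (guaranteed by the cited fact in the definition of the symmetric dot product, provided the incidence hypotheses — distinctness of the vertices $u_1,u_2,v_1,v_2$ — hold, which again is a matter of choosing the gluing site away from short cycles). A secondary, purely cosmetic point is that $G$ here need only be an ordinary snark, i.e. cyclically $4$-edge-connected, so no analogue of Lemma A7 is needed; the dot product is well known to preserve cyclic $4$-edge-connectivity and girth $\ge 5$, so the corollary's weaker conclusion costs nothing. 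With these verifications in place, the corollary follows by a finite sequence of dot-product steps layered on top of the inductions already carried out for Theorems 4.1 and 5.5.
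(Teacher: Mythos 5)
Your proposal is correct in substance, but it organizes the construction differently from the paper, and the paper's route is cleaner. The paper's proof is a one-line induction: start from the cyclically 5-edge-connected snark of Theorem 5.5 with $\psi=5^k\cdot 7^l$ and repeatedly dot-product with copies of the Petersen graph $P$; since $\psi(P,p)=1$, each application of Theorem 4.2 multiplies the running value of $\psi$ by exactly $2$ and each application of Theorem 4.3 by exactly $3$, so after $i$ applications of the former and $j$ of the latter one lands on $2^i\cdot 3^j\cdot 5^k\cdot 7^l$ with no exponent adjustment and no case split; cyclic 4-edge-connectedness is preserved at each step by Remark A8. You instead perform a single dot product between the Theorem 4.1 snark (realizing $2^i3^j$ up to one factor) and the Theorem 5.5 snark, and must then absorb the extra factor of $2$ (or $3$) that Theorems 4.2 and 4.3 themselves contribute; this forces the boundary cases $i=0$ and $i=j=0$, and, in the Theorem 4.3 branch, careful placement of the surgery site, since the edges $(u,u_1)$ and $(U_1,U)$ appearing in that formula are adjacent to the deleted vertices, so the realizing edges must be arranged to sit there. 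All of this can be carried through --- your case analysis is complete, and the freedom to choose $\epsilon$ and $E$ makes the incidence requirements satisfiable --- but the repeated-Petersen induction buys exactly the avoidance of this bookkeeping, which is why the paper presents the corollary as a continuation of the same induction underlying Theorem 4.1 rather than as a merger of two finished products.
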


\begin{proof}Using Theorem 5.5 as a starting point, we can use repeated applications of Theorems 4.2, 4.3, to establish Corollary 5.6.  At each step of the induction, the cyclic 4-edge-connectedness is preserved (by Remark A8 of the Appendix).\medskip
\end{proof}

\section{Future Work}
 The special case of superposition used in the arguments for Theorems 5.1 and 5.3 does not lead to any other "new" numbers $\psi(G,e)$: for each edge $e$ of the "big" snark $G$, the number $\psi(G,e)$ only has $2, 3, 5,$ or $7$ as prime factors.  Hence, any new possibilities for the numbers $\psi(G,e)$ using a similar inductive approach will require a different method of constructing snarks.  The "flower snark" construction given by Isaacs \cite{Is} and "BlowUp" construction given by \cite{Ha} seem to be possibly promising methods for further investigation.\medskip

\section{Acknowledgements}
I would like to express my gratitude to Professor Richard Bradley for his suggestion of this research topic as a project in the 2006 Research Experiences for Undergraduates (REU) program at Indiana University, as well as his provision of the appendix regarding "cyclic edge-connectedness" included below.  I would also like to acknowledge the US National Science Foundation (NSF) for their sponsorship of the REU program.

\vfill\eject

\section{Appendix}

\centerline {\bf Notes on cyclic edge-connectedness}
\bigskip

This material is taken essentially verbatim from notes of Bradley
\cite{RB}, with his encouragement.
\bigskip

In connection with Kochol's \cite{Ko} ``superposition''
technique for creating ``big'' snarks from ``smaller'' ones, there
is a body of information on ``cyclic edge-connectedness'' that is
well established in extensive generality and appears to be simply
taken for granted by the snark specialists, and for which a
convenient reference seems hard to find. In what follows below,
enough of that known background information will be supplied (in a
convenient though very restricted form) to suffice for the
verification that a certain ``cyclic 5-edge-connectedness'' property
is ``preserved'' in a pertinent particular application of Kochol's
technique.
\bigskip

   Suppose $G$ is a simple graph (that is, an undirected,
connected graph with finitely many vertices and no loops or multiple
edges).  Two subgraphs $A$ and $B$ of $G$ are said to be
``edge-disjoint'' if they have no edges in common (they may have one
or more vertices in common), or simply ``disjoint'' if they have no
vertices in common.
\bigskip

   If $G$ is a simple graph and $S$ is a (nonempty) set of some
edges of $G$, then in what follows, $G-S$ will denote the graph that
one obtains from $G$ by deleting all edges in $S$. (No vertices are
deleted; the graph $G-S$ retains all vertices of $G$, including the
ones that were endpoints of the edges in $S$.)

   If $G$ is a simple graph and $S$ is a set of some edges of $G$,
then $S$ is a ``cut set'' if the graph $G-S$ is disconnected.
\bigskip

   Suppose (say) $G$ is a simple graph, and $A$ and $B$ are
disjoint subgraphs of $G$, with $A$ and $B$ each being connected.
These two graphs $A$ and $B$ are ``5-edge-connected to each other in
$G$'' if either one (hence both) of the following equivalent
conditions holds:

   (i) There exist five mutually edge-disjoint paths in $G$,
each of which has one endpoint in $A$ and the other endpoint in $B$.

   (ii) There does not exist a cut set $S$ consisting of four
or fewer edges of $G$ such that $A$ and $B$ are respectively
entirely in (i.e. subgraphs of) different components of $G-S$.

   The equivalence of (i) and (ii) is a special case of the
``edge form'' of Menger's Theorem.  See e.g. (\cite{GY}, p.\ 196,
Theorem 5.3.10). In that formulation of the ``edge form'' of
Menger's Theorem, the graph is not assumed to be simple
--- it can have ``multiple edges.''  To apply that formulation here,
first contract each of the sets $A$ and $B$ to a vertex.
\bigskip

   Now let us turn to cycles (``closed paths''). \bigskip

   {\bf Lemma A1.} \quad {\sl Suppose $G$ is a simple graph such
that every vertex in $G$ has valence at least 2.  Then $G$ contains
at least one cycle.}
\bigskip

   This is a well known elementary fact.  To obtain a cycle, one
starts at some vertex, chooses a path ``at random,'' one edge at a
time (with no ``backtracking''), until one hits a vertex that has
already been hit previously.
\bigskip

   A simple graph $G$ is ``cyclically 5-edge-connected'' if every
two disjoint cycles in $G$ are 5-edge-connected to each other in
$G$.  (This definition is considered to hold ``by default'' for any
simple graph that does not have two disjoint cycles.  In what
follows below, such ``trivial'' cases will not occur.)

   A ``5-cycle'' is a cycle with exactly 5 edges.

   A ``hinge'' is a graph consisting of two adjacent edges and
their endpoints --- that is, three vertices $v_1, v_2, v_3$ and the
two edges $(v_1, v_2)$ and $(v_2, v_3)$.
\bigskip

   {\bf Lemma A2.} \quad {\sl Suppose $G$ is a simple cubic graph
which is cyclically 5-edge- \hfil\break connected, and $Q$ is a
5-cycle in $G$. Suppose $H$ is a hinge in $G$ which is disjoint from
$Q$.  Then $Q$ and $H$ are 5-edge-connected to each other in $G$.}
\bigskip

   {\it Proof.} \quad Suppose instead that there exists a
cut set $S$ of four or fewer edges in $G$ such that $H$ and $Q$ are
in different components of $G-S$.  Choose such a set $S$ with
minimal cardinality.  Let $F$ denote the component of $G-S$ that
contains $H$.  We shall use Lemma A1 to show that $F$ has a cycle.
Thereby a contradiction will have occurred, and Lemma A2 will be
proved.

   If no vertex in $F$ is attached (in $G$) to more than one edge
in $S$, then every vertex in $F$ has valence at least 2 (within $F$
itself), and Lemma A1 applies simply to $F$.

   If exactly one vertex $v$ in $F$ is attached to two edges in
$S$, then let $e$ denote the third edge (the one in $F$) attached to
$v$, and apply Lemma A1 to $F-\{e,v\}$.  (The notation $F-\{e,v\}$
means the graph that one obtains from $F$ by deleting the edge $e$
and the vertex $v$.  Similar notation is used in the next
paragraph.)

   The only remaining case is where exactly two different
vertices $u$ and $v$ in $F$ are each attached to two edges in $S$.
Then there is not an edge $(u,v)$ (for otherwise that edge by itself
would be a component of $G-S$ and it would have to be $F$,
contradicting the stipulation that the hinge $H$ is in $F$). Let
$e_1$ resp.\ $e_2$ denote the edge in $F$ attached to the vertex $u$
resp.\ $v$.  If $e_1$ and $e_2$ are not adjacent, then apply Lemma
A1 to $F - \{e_1,e_2,u,v\}$.  If instead $e_1$ and $e_2$ are
adjacent, attached to the same vertex $w$, then let $e_3$ denote the
third edge attached to $w$ ($e_3$ will be in $F$, not in $S$), and
apply Lemma A1 to $F - \{e_1, e_2, e_3, u,v,w\}$.  That finishes the
proof.
\bigskip

   {\bf Construction A3.} \quad Now let us look at one particular
operation.   It will be repeated here in a form convenient for the
rest of this Appendix.  This construction, and its properties
mentioned below, are (at least in principle) already known as a
special case of the general ``superposition'' techniques of Kochol.
\bigskip

   (1a) Let $P_1$ and $P_2$ be two disjoint Petersen graphs.

   (1b) Let $(u_1, u_2, u_3, u_4, u_5)$ be, in order, the five
vertices of a particular 5-cycle in $P_1$.  Let $u_2'$ denote the
other vertex (besides $u_1$ and $u_3$) that is adjacent to $u_2$.

   (1c) Let $(v_1, v_2, v_3, v_4, v_5)$ be, in order, the five
vertices of a particular 5-cycle in $P_2$. Let $v_2'$ denote the
other vertex (besides $v_1$ and $v_3$) that is adjacent to $v_2$.

   (1d) Let $P_d$ (the subscript $d$ stands for ''double'') denote
the simple cubic graph that one obtains from $P_1$ and $P_2$ by (i)
deleting from $P_1$ the vertex $u_2$ and all three edges attached to
it, (ii) deleting from $P_2$ the vertex $v_2$ and all three edges
attached to it, and then (iii) inserting three new edges $(u_1,
v_1)$, $(u_2', v_2')$, and $(u_3,v_3)$.

   (1e) Every cycle in $P_d$ has at least 5 edges.  For any
such cycle that was originally in $P_1$ or $P_2$, this holds
trivially since the Petersen graph has girth 5.  For any cycle $C$
in $P_d$ that has at least one (and hence exactly two) of the three
``new edges'' in (1d)(iii), the number of edges in $C$ is in fact at
least 8.

   (1f) Let $\tilde P_d$ denote the graph that one obtains from
$P_d$ by (i) deleting the edges $(u_4, u_5)$ and $(v_4,v_5)$ (but
leaving their endpoints intact), and (ii) inserting a vertex $\nu_3$
on the edge $(u_3,v_3)$ (thereby replacing the edge $(u_3,v_3)$ by
the two new edges $(u_3, \nu_3)$ and $(\nu_3, v_3)$).

   (1g) This graph $\tilde P_d$ has exactly five 2-valent vertices:
$u_5, u_4, \nu_3, v_4$, and $v_5$.  Also, no edge in $\tilde P_d$ is
attached to more than one of those five vertices. (All other
vertices in $\tilde P_d$ are 3-valent.)

   (1h) As a trivial consequence of (1e), every cycle in
$\tilde P_d$ has at least 5 edges.
\medskip

   (2a) Now suppose $G_0$ is a cyclically 5-edge-connected
snark that is disjoint from the graphs $P_1$, $P_2$, $P_d$, $\tilde
P_d$ in (1a)-(1h). Suppose $H_0$ is a hinge in $G_0$.  Suppose $Q_0$
is a 5-cycle in $G_0$ such that $Q_0$ is disjoint from $H_0$.

   (2b) Let the vertices of $H_0$ be denoted $t_1, t_2, t_3$, with
$H_0$ having edges $(t_1, t_2)$ and $(t_2,t_3)$. Let the five edges
attached to $H_0$ (but not in $H_0$) be denoted by $e_1 := (t_1,
w_1)$, $e_2 := (t_1, w_2)$, $e_3 := (t_2, w_3)$, $e_4 := (t_3,
w_4)$, $e_5 := (t_3, w_5)$. (That is, the ``other endpoints'' of
those edges are $w_1, \dots, w_5$.)\ \ Since $G_0$ (a snark with a
5-cycle $Q_0$) has girth 5, it follows that the vertices $w_i$ are
distinct.

   (2c) Let $\tilde G_0$ denote the graph that one obtains from
$G_0$ by deleting the hinge $H_0$ (its vertices and edges) as well
as the five edges $e_i$ connected to $H_0$.  Then $\tilde G_0$ has
five 2-valent vertices $w_1, \dots, w_5$; its other vertices are
each 3-valent. The 5-cycle $Q_0$ is a subgraph of $\tilde G_0$.
(Some of the $w_i$'s may be vertices of $Q_0$.)

   (2d) Every cycle in $\tilde G_0$ has at least 5 edges
(since every cycle in the original snark $G_0$ has at least 5
edges.)
\bigskip

   (3a) Finally, let $G$ denote the cubic graph that one obtains
from $\tilde P_d$ and $\tilde G_0$ by inserting the five new edges
$E_1 := (u_5, w_1)$, $E_2 := (u_4, w_2)$, $E_3 := (\nu_3, w_3)$,
$E_4 := (v_4, w_4)$, and $E_5 := (v_5, w_5)$.

   (3b) If a given cycle $C$ in $G$ has at least one (and hence
exactly two or four) of the five new edges $E_1, \dots, E_5$, then
$C$ has at least 5 edges, and $C$ also has at least one hinge (i.e.\
two adjacent edges) in $\tilde P_d$. (Those facts are trivial
consequences of (1g).)

   (3c) Every cycle $C$ in the graph $G$ has at least 5 edges
(by (1h), (2d), and (3b)).

   (3d) If $C$ is a cycle in $G$, then either $C$ is entirely
in $\tilde G_0$ or $C$ has a hinge in $\tilde P_d$. (See (3b)
again).

   (3e) In $G$, one can trivially pick out three 5-cycles
that are mutually disjoint from each other (for example, one being
$Q_0$, and the other two being originally in $P_1$ and $P_2$
respectively).

   (3f) If $H$ is any hinge in $G$, then $G$ has a 5-cycle
$Q^*$ that is disjoint from $H$.  (By a careful but simple argument,
it is not possible for all three of the disjoint 5-cycles mentioned
in (3e) above to contain a vertex of $H$.)
\bigskip

   The remaining four lemmas are all in the context of
Construction A3, with all assumptions there (recall especially the
ones in (1a) and (2a)) satisfied.
\bigskip

   {\bf Lemma A4.} \quad {\sl Suppose $H$ is a hinge in
$\tilde P_d$.  Then $H$ is 5-edge-connected in $G$ with the 5-cycle
$Q_0$ (in $\tilde G_0$).}
\bigskip

   {\it Proof.} \quad For each hinge $H$ in $\tilde P_d$, there
exist five edge-disjoint paths in the graph $\tilde P_d \cup \{w_1,
\dots, w_5\} \cup \{E_1, \dots, E_5\}$ that each start with a vertex
in $H$ and end with one of the edges $E_i$ (i.e.\ with one of the
vertices $w_i$).  (One can verify that fact for each of the 47
hinges in $\tilde P_d$, one by one.  By an obvious symmetry, it
suffices to check 24 of those hinges.)

   Recall that in $G_0$, the hinge $H_0$ and the 5-cycle
$Q_0$ are disjoint.  Since $G_0$ was cyclically 5-edge-connected (by
assumption), one has by Lemma A2 that there exist five edge-disjoint
paths in $G_0$ from $Q_0$ to $H_0$, i.e.\ ending with the edges
$e_1, \dots e_5$ respectively.

   By an obvious ``splicing'' of the edges $E_i$ and $e_i$ for
each $i = 1, \dots 5$, one obtains five edge-disjoint paths from
$Q_0$ to any given hinge $H$ in $\tilde P_d$.  Thus Lemma A4 holds.
\bigskip

   {\bf Lemma A5.} \quad {\sl If $A$ and $B$ are disjoint
cycles in $\tilde G_0$, then $A$ and $B$ are 5-edge-connected in
$G$.}
\bigskip

   {\it Proof.} \quad Recall the assumption that $G_0$ is
cyclically 5-edge-connected.   The cycles $A$ and $B$ are cyclically
5-edge-connected in $G_0$.  That is, there exists a family
$\varepsilon$ of five edge-disjoint paths between $A$ and $B$; some
of those paths may include one or both edges in the hinge $H_0$
and/or one or more of the edges $e_i$ attached to $H_0$.

   By adding to $\tilde G_0$ the edges $E_i,\ i=1, \dots, 5$
and certain particular (vertices and) edges in $\tilde P_d$, one
extends $\tilde G_0$ to a particular subgraph $F$ of $G$ such that
$F$ is conformal to $G_0$, and one obtains a family $\varepsilon$ of
five edge-disjoint paths in $F$ (hence in $G$) that run from $A$ to
$B$, respectively ``mirroring'' the paths in the family
$\varepsilon$. Thus Lemma A5 holds.
\bigskip

   {\bf Lemma A6.} \quad {\sl The graph $G$ is cyclically
5-edge-connected.}
\bigskip

   {\it Proof.} \quad Suppose instead that there exist disjoint
cycles $A$ and $B$ in $G$ such that $A$ and $B$ are not
5-edge-connected to each other in $G$.  We shall seek a
contradiction.

   Let $S$ be a cut set of four or fewer edges such
that the graph $G-S$ is disconnected, with $A$ and $B$ being in
different components.   Choose such a set $S$ with minimal
cardinality.

The argument will be divided into two cases according to whether or
not the 5-cycle $Q_0$ contains edges in $S$.
\medskip

   {\it Case 1. $Q_0$ contains no edges in $S$\/.}  (One or more
of the edges in $S$ may have endpoints in $Q_0$.)

   Then one of the cycles $A$ or $B$ --- without loss of
generality, say $A$ --- is in the component of $G-S$ different from
the one $Q_0$ is in. If $A$ itself is entirely in $\tilde G_0$, then
by Lemma A5 (applied to $A$ and $Q_0$), one obtains a contradiction.
Therefore, suppose instead (here in Case 1) that $A$ is not entirely
in $\tilde G_0$.

   Then by observation (3d) in Construction 3, $A$ contains a
hinge $H$ in $\tilde P_d$.  By Lemma A4, $H$ is 5-edge-connected in
$G$ to $Q_0$.  As a trivial corollary, the entire cycle $A$ is
5-edge-connected to $Q_0$ in $G$; and thereby again a contradiction
occurs.  Thus Case 1 is dismissed.
\medskip

{\it Case 2. $Q_0$ contains an edge of $S$\/.}

   Then (as an elementary if slightly hidden consequence of the
``minimal cardinality'' of $S$) $Q_0$ contains exactly two edges in
$S$, and they are not adjacent to each other. It follows that one of
the cycles $A$ or $B$ --- without loss of generality, say $A$ --- is
in the component $F$ of $G-S$ that contains exactly one edge $e$ of
$Q_0$.  If $H$ is any hinge in $F$ that is disjoint from $Q_0$, then
by a simple argument, $H$ cannot be 5-edge-connected to $Q_0$ in
$G$.

   Now suppose first that $A$ is entirely in $\tilde G_0$.
Then since $A$ has at least 5 edges and can include at most one edge
of $Q_0$ (the edge $e$ mentioned above), $A$ has two adjacent edges
that form a hinge $H$ disjoint from $Q_0$. Since $G_0$ is cyclically
5-edge-connected, $H$ and $Q_0$ are 5-edge-connected to each other
in $G_0$ by Lemma A2. By mimicking the (``conformal graph'') proof
of Lemma A5, one has that $H$ and $Q_0$ are 5-edge-connected to each
other in $G$.  But that contradicts the last sentence in the
preceding paragraph above.

   Therefore, suppose instead that the cycle $A$ is not entirely
in $\tilde G_0$. Then by observation (3d) in Construction A3, $A$
has a hinge $H$ in $\tilde P_d$.  By Lemma A4, $H$ is
5-edge-connected to $Q_0$ in $G$, again producing a contradiction.
Thus Case 2 is dismissed, and Lemma A6 is proved.
\bigskip

   {\bf Lemma A7.} \quad {\sl The graph $G$ is a cyclically
5-edge-connected snark.  Also, for any hinge $H$ in $G$, there
exists a 5-cycle $Q^*$ in $G$ such that $Q^*$ is disjoint from $H$.}
\bigskip

   {\it Proof.} \quad It was already observed in (3c) in
Construction A3 that $G$ has girth at least 5.  By Lemma A6, $G$ is
in fact cyclically 5-edge-connected.  The fact that $G$ cannot be
edge-3-coloured, is a special case of a much more general result of
Kochol \cite{Ko} at the heart of his ``superposition'' technique for
constructing ``big'' snarks from ``smaller'' ones.  The second
sentence in Lemma A7 was pointed out in (3f) in Construction A3.
Thus Lemma A7 holds.
\bigskip

   {\bf Remark A8.} \quad The following comments do not pertain
to construction A3 above.  Instead they involve Isaac's \cite{Is}
``dot product.''

   It is well known that if two disjoint snarks $G_1$ and $G_2$
are ``joined'' via a ``dot product,'' the resulting graph $G$ is a
snark.  In particular, $G$ will be ``cyclically 4-edge-connected''
and have no ``squares'' (in addition to failing to be
edge-3-colourable).

   At least in the special case of the ``symmetric'' version of
the dot product that is employed in [\cite{Br}, Section 2], the
proof is somewhat similar to, but considerably easier than, the
arguments above (and does not involve the use a any particular cycle
such as $Q_0$ above as a ``reference point'').  In place of a hinge
(in the arguments above), one uses just an edge.  As an analog of
Lemma A2 above, one shows that in a simple cubic graph which is
cyclically 4-edge-connected, any two disjoint edges are
``4-edge-connected to each other.''  The rest of the argument
(roughly analogous to Lemmas A4-A7 above) will be left here to the
reader.
\bigskip

\end{document}